\newtheorem{theorem}{Theorem}[section]
\newtheorem{prop}[theorem]{Proposition}
\newtheorem{lemma}[theorem]{Lemma}
\newtheorem{fact}[theorem]{Fact}
\newtheorem{definition}[theorem]{Definition}
\newtheorem{conjecture}[theorem]{Conjecture}
\theoremstyle{plain}
\numberwithin{equation}{theorem}
\theoremstyle{remark}
\newtheorem{claim}[theorem]{Claim}
\newtheorem{remark}[theorem]{Remark}
\newcommand{\Fp}{{\mathbb F}_p}
\newcommand{\Q}{{\mathbb Q}}
\newcommand{\Z}{{\mathbb Z}}
\newcommand{\OO}{{\mathcal O}}
\renewcommand{\L}{{\mathcal L}}
\newcommand{\id}{\operatorname{id}}
\newcommand{\Qbar}{\overline{\Q}}
\DeclareMathOperator{\im}{Im}
\DeclareMathOperator{\Gal}{Gal}
\DeclareMathOperator{\GL}{GL}
\DeclareMathOperator{\N}{\mathbb{N}}
\DeclareMathOperator{\tor}{tor}
\DeclareMathOperator{\tors}{tors}
\DeclareMathOperator{\End}{End}
\DeclareMathOperator{\Pic}{Pic}
\newcommand{\isomto}{\overset{\sim}{\rightarrow}}
\newcommand{\bG}{{\mathbb G}}
\newcommand{\bA}{{\mathbb A}}
\newcommand{\lra}{\longrightarrow}
\newif\ifhascomments \hascommentstrue
  \newcommand{\dragos}[1]{{\color{red}[[\ensuremath{\bigstar\bigstar\bigstar} #1]]}}
  \newcommand{\matt}[1]{{\color{red}[[\ensuremath{\spadesuit\spadesuit\spadesuit} #1]]}}
  \newcommand{\dragos}[1]{}
  \newcommand{\matt}[1]{}
\begin{document}

\title[Density of orbits inside semiabelian varieties]{Density of orbits of dominant regular self-maps of semiabelian varieties}

\author{Dragos Ghioca}
\author{Matthew Satriano} 

\thanks{The authors were partially supported by Discovery grants from the National Sciences and Engineering Research Council of Canada.}

\address{Department of Mathematics \\
University of British Columbia\\
Vancouver, BC V6T 1Z2\\
Canada}
\email{dghioca@math.ubc.ca}

\address{Pure Mathematics \\ 
University of Waterloo \\
200 University Avenue West \\
Waterloo, Ontario, Canada N2L 3G1   
}
\email{msatriano@uwaterloo.ca}

\begin{abstract} We prove a conjecture of Medvedev and Scanlon \cite{medvedev-scanlon} in the case of regular morphisms of semiabelian varieties. That is, if $G$ is a semiabelian variety defined over an algebraically closed field $K$ of characteristic $0$, and $\varphi\colon G\to G$ is a dominant regular self-map of $G$ which is not necessarily a group homomorphism, we prove that one of the following holds: either there exists a non-constant rational fibration preserved by $\varphi$, or there exists a point $x\in G(K)$ whose $\varphi$-orbit is Zariski dense in $G$.
\end{abstract}

\maketitle


\section{Introduction}
\label{intro section}

For any self-map $\Phi$ on a set $X$, and any non-negative integer $n$, we denote by $\Phi^n$ the $n$-th compositional power, where $\Phi^0$ is the identity map. For any $x\in X$, we denote by $\OO_\Phi(x)$ its orbit under the action of $\Phi$, i.e., the set of all iterates $\Phi^n(x)$ for $n\ge 0$. 

Our main result is the following.
\begin{theorem}
\label{main result}
Let $G$ be a semiabelian variety defined over an algebraically closed field $K$ of characteristic $0$ and let $\varphi:G\lra G$ be a dominant regular self-map which is not necessarily a group homomorphism. Then either there exists $x\in G(K)$ such that $\OO_\varphi(x)$ is Zariski dense in $G$, or there exists a nonconstant rational function $f\in K(G)$ such that $f\circ \varphi = f$. 
\end{theorem}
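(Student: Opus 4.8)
My plan is to reduce, using the structure theory of semiabelian varieties, to a dichotomy governed by the linear part of $\varphi$, and to handle the degenerate branch with the Mordell--Lang theorem.

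\smallskip
\noindent\emph{Normalizing $\varphi$.} First I would use the rigidity of semiabelian varieties — every morphism of varieties $G\to G$ is the composite of a group homomorphism with a translation, which one sees by pushing forward to the abelian quotient and invoking the classical rigidity lemma there, then using Rosenlicht's description of $\OO(G)^\times$ and the absence of complete rational curves in $G$ to trivialize the residual $T$-valued cocycle — to write $\varphi=T_{g_0}\circ\psi$ with $g_0:=\varphi(0)$ and $\psi:=T_{-g_0}\circ\varphi\in\End(G)$. Dominance forces $\psi$ to be surjective, hence (its kernel being finite) an isogeny, and $\varphi^n(x)=\psi^n(x)+c_n$ with $c_n:=\varphi^n(0)=\sum_{i=0}^{n-1}\psi^i(g_0)$. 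Since $\psi$ satisfies a monic integer polynomial in $\End(G)$ (its characteristic polynomial on a Tate module), the subgroup $\Gamma:=\langle\psi^i(g_0):i\ge0\rangle\le G(K)$ is finitely generated, with $\psi(\Gamma)=\Gamma$ and $\OO_\varphi(0)\subseteq\Gamma$. I would also record two harmless reductions: $\varphi$ may be replaced by an iterate $\varphi^k$ (a dense $\varphi$-orbit and a dense $\varphi^k$-orbit exist simultaneously since $G$ is irreducible, and a nonconstant $\varphi^k$-invariant rational function $h$ gives a nonconstant $\varphi$-invariant one — some elementary symmetric function of $h,h\circ\varphi,\dots,h\circ\varphi^{k-1}$ is nonconstant, else $h$ is algebraic over $K$, hence constant); and $\varphi$ may be conjugated by a translation $T_p$, which replaces $(g_0,\psi)$ with $(\psi(p)-p+g_0,\psi)$.

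\smallskip
\noindent\emph{The dichotomy on the linear part.} Next I would split according to whether $\psi$ has a root of unity among the eigenvalues of its action on $\operatorname{Lie}(G)$. If it does not, then $\psi-\id$ is an isogeny, so after solving $(\psi-\id)(p)=-g_0$ and conjugating by $T_p$ I may assume $g_0=0$, i.e. $\varphi=\psi$ is an endomorphism; here the $\psi$-preperiodic points form a subgroup of finite rank (as $\psi^m-\psi^n$ is an isogeny for $m>n$), and — along the lines of the Pink--R\"{o}ssler theorem, via Mordell--Lang — every proper $\psi$-invariant subvariety is contained in a finite union of torsion translates of proper semiabelian subvarieties of $G$; it then suffices to take any $x\in G(K)$ lying in no such translate, which exists over any algebraically closed field of characteristic $0$ (take the image of $x$ in the abelian part to be generic, using that the Mordell--Weil group contains arbitrarily many $\Q$-independent points). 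If some eigenvalue of $\psi$ is a root of unity, I would first pass to an iterate so that $1$ is an eigenvalue, then replace $G$ by $H:=G/G_0$, where $G_0:=\bigcap_n\im((\psi-\id)^n)$ is the stabilizing image: $\varphi$ is equivariant for $G\to H$, the induced map $\bar\psi$ is unipotent on $\operatorname{Lie}(H)$, and $\dim H<\dim G$ unless $\psi$ is already unipotent. In the former case I would induct on $\dim G$: an invariant function on $H$ pulls back, while a dense $\bar\varphi$-orbit on $H$ can be promoted to a dense $\varphi$-orbit on $G$ by a fibral argument using that $\psi$ has no root-of-unity eigenvalue along $G_0$.

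\smallskip
\noindent\emph{The unipotent case and the main obstacle.} The remaining case — $\psi$ unipotent, no drop in dimension — is where I expect the difficulty to concentrate. Here I would argue that, if $G$ carries no nonconstant $\varphi$-invariant rational function, then $\OO_\varphi(0)$ is already Zariski dense. Were it not, then since $\OO_\varphi(0)\subseteq\Gamma$ with $\Gamma$ finitely generated, the Mordell--Lang theorem for semiabelian varieties would confine $\OO_\varphi(0)$ to a finite union of cosets of proper semiabelian subvarieties; feeding this back into the recursion $c_{n+1}=\psi(c_n)+g_0$ via a pigeonhole argument should then produce a $\varphi$-equivariant dominant map onto a positive-dimensional semiabelian variety on which the induced self-map has finite order after iterating — hence a nonconstant $\varphi$-invariant rational function, contradicting the hypothesis. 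Turning this into an actual argument — passing from the cosets output by Mordell--Lang, which need not be $\psi$-stable, to an honest $\varphi$-equivariant fibration — is the crux, and the step I expect to be most delicate; it is also precisely where the absence of an invariant fibration is genuinely used. Assembling these pieces — the normalization, the eigenvalue dichotomy with its dimension induction, and the unipotent analysis via Mordell--Lang — would complete the proof.
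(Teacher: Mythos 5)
Your skeleton (rigidity to write $\varphi=T_{g_0}\circ\psi$, reduction to an iterate, a dichotomy/Fitting-type decomposition according to root-of-unity eigenvalues of $\psi$, and Mordell--Lang to control orbit closures) is the same as the paper's, but each of the three substantive steps is left with a genuine gap, and in two of them the justification you sketch would fail. First, in the no-root-of-unity branch you claim a suitable $x$ exists because one can ``take the image of $x$ in the abelian part to be generic.'' This does not work: the torsion translates of proper $\psi$-invariant semiabelian subvarieties form a \emph{countably infinite} family, and over a countable field such as $\Qbar$ there is no automatic point avoiding a countable union of proper subvarieties --- this is the entire difficulty of the Medvedev--Scanlon conjecture over countable fields. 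Moreover, an invariant subgroup $H$ may surject onto the abelian quotient $A$ (its failure to be all of $G$ living entirely in the toric direction), so genericity of the image in $A$ gives no control whatsoever over membership in $H$. The paper's substitute is a Galois-theoretic construction (Proposition~\ref{existence of a point 1} and Claim~\ref{simple groups claim}): one uses Hilbertianity of $K_0(G_{\tor})$ to produce coordinates that cannot lie in any proper algebraic subgroup, handling the toric directions by multiplicative independence and the abelian directions by the argument of \cite{GS-abelian}; nothing of this sort appears in your sketch.

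Second, the ``fibral argument'' by which you propose to promote a dense orbit on $G/G_0$ to one on $G$ is not a routine step --- it is precisely where the absence of Poincar\'e's Reducibility Theorem bites, and it is the technical core of the paper (Proposition~\ref{generic relative to a f.g. group}, which requires the anti-equivalence of categories of Fact~\ref{antiequiv of cats} and the notion of a minimal dominating semiabelian subvariety, available only after an isogeny). Note also that the paper does not quotient: it decomposes $G=G_1+G_2$ with $G_1\cap G_2$ finite, where $G_1=(\tau-\id)^r(G)$ and $G_2=f_1(\tau)(G)$ are both \emph{images} of polynomials in $\tau$; this Fitting decomposition is what replaces Poincar\'e reducibility, and the interaction between the two factors is then handled by choosing $x_1\in G_1$ generic relative to the finitely generated group generated by the orbit data in $G_2$ --- a choice whose existence is exactly Proposition~\ref{generic relative to a f.g. group}. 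Third, you explicitly flag the unipotent case as unresolved; the paper resolves it by combining the topological-generator construction above with the combinatorial Lemma~\ref{combinatorial lemma} on sets of the form $\{P_1(n)y_1+\cdots+P_r(n)y_r\}$, which converts density of the cyclic group generated by $\beta(x_2)+y_2$ into density of the orbit. So while your roadmap points in the right direction, all three load-bearing arguments are missing, and the one concrete mechanism you do propose (genericity in the abelian part) is insufficient.
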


Theorem~\ref{main result} answers affirmatively the following conjecture raised by Medvedev and Scanlon in \cite{medvedev-scanlon} for the case of regular morphisms of semiabelian varieties.

\begin{conjecture}[{\cite[Conjecture~7.14]{medvedev-scanlon}}]
\label{M-S conjecture}
Let $X$ be a quasiprojective variety defined over an algebraically closed field $K$ of characteristic $0$ and let $\varphi:X\dashrightarrow X$ be a rational self-map. Then either there exists $x\in X(K)$ whose orbit under $\varphi$ is Zariski dense in $X$, or $\varphi$ preserves a nonconstant fibration, i.e., there exists a nonconstant rational function $f\in K(X)$ such that $f\circ \varphi = f$.
\end{conjecture}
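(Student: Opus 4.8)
The final statement is the general Zariski dense orbit conjecture, so my plan is the standard three-stage strategy in arithmetic dynamics: birational reductions, a structural analysis via canonically defined fibrations, and a $p$-adic construction of a dense orbit in the remaining ``primitive'' case. First I would reduce the setup. Replacing $X$ by the Zariski closure of $\varphi^N(X)$ for $N\gg 0$, I may assume $\varphi$ is dominant, since a Zariski dense orbit of the restricted map is one of the original. Because the hypotheses and conclusion are insensitive to replacing $X$ by a birationally equivalent variety and $\varphi$ by an iterate, resolution of singularities (available in characteristic $0$) lets me assume $X$ is smooth and projective. Then I would spread $X$ and $\varphi$ out over a finitely generated $\mathbb{Z}$-algebra $R\subset K$ and, by a specialization/Bertini argument, reduce to the case where $K$ is a number field; the delicate point here is that the \emph{nonexistence} of a nonconstant $f\in K(X)$ with $f\circ\varphi=f$ must be detected over such a subfield, which follows by applying Noetherianity to the finite-type parameter spaces of rational functions on $X$ of bounded degree.

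Next comes the structural dichotomy. I would run the canonical fibrations attached to $X$ one at a time: the Albanese map $X\to\Alb(X)$, the Iitaka fibration of $K_X$ when $\kappa(X)\ge 1$, and the maximal rationally connected fibration (or its refinement, the core). Each of these is functorial, hence preserved by $\varphi$ after passing to an iterate; if any has positive-dimensional base, I pull back a nonconstant rational function and I am done. Iterating, I reduce to the case where all these invariants are trivial, i.e. $X$ is rationally connected and $\varphi$ acts ``primitively'' in the sense that no $\varphi$-periodic nonconstant fibration exists. This is the genuinely hard regime; it is known for $X$ a semiabelian variety (the content of Theorem~\ref{main result}), for $X$ a surface, and for $X=\mathbb{P}^n$ with split self-maps, among a handful of other cases, but no uniform treatment is available.

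In the primitive regime I would try to build the dense orbit $p$-adically, imitating the semiabelian case. With $X,\varphi$ over a number field and no invariant fibration, choose a prime $\mathfrak p$ of good reduction together with a $\varphi$-periodic point $x_0$ whose differential $d\varphi$ at $x_0$ is a $\mathfrak p$-adic contraction with multiplicatively independent eigenvalues (for $X=G$ a semiabelian variety one takes $x_0$ torsion and works inside the formal group, which is exactly the mechanism behind Theorem~\ref{main result}). On the residue disk $U$ around $x_0$, $\varphi$ is then an analytic contraction, so for $x\in U$ outside a thin set the orbit closure $Y:=\overline{\OO_\varphi(x)}$ is an irreducible $\varphi$-invariant subvariety whose infinitesimal behaviour at $x_0$ records the non-resonant eigenvalues; a dimension count then forces $Y=X$, since otherwise the ideal of $Y$ would yield a nonconstant $\varphi$-invariant rational function, contradicting primitivity. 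Choosing $x$ outside the thin set is legitimate because $K$ is uncountable, or via Hilbert irreducibility when $K=\overline{\mathbb{Q}}$.

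The main obstacle is precisely this last regime in full generality: for an arbitrary rationally connected $X$ one has no control over the periodic points of $\varphi$, over its dynamical degrees, or over the geometric meaning of ``no invariant fibration,'' so the $p$-adic contraction argument lacks a canonical input and the reduction stalls. This is exactly why the present paper establishes only the semiabelian case, where the group law provides torsion periodic points and the formal group makes the $\mathfrak p$-adic analysis completely explicit; extending the argument past homogeneous spaces is the crux of the open conjecture.
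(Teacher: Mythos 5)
The statement you are addressing is Conjecture~\ref{M-S conjecture}, which is precisely that: an open conjecture quoted from Medvedev--Scanlon. The paper does not prove it and does not claim to; it proves only the special case of dominant \emph{regular} self-maps of semiabelian varieties (Theorem~\ref{main result}). Your proposal is likewise not a proof: you yourself concede that after the birational reductions and the canonical fibrations the argument ``stalls'' in the primitive (e.g.\ rationally connected) regime, and that regime is exactly the content of the conjecture. So there is a genuine gap, and it is not a repairable technical step but the entire core of the problem. Several of your reductions are also shakier than you present them: replacing $K$ by a number field via spreading out is not known to be legitimate, because one must certify that the \emph{nonexistence} of a $\varphi$-invariant nonconstant rational function descends to the special fibre --- this is not a Noetherianity statement about bounded-degree function spaces, and indeed the countable-field case (already $K=\oQ$) is precisely where the conjecture is hard, while the uncountable case is the one settled by Amerik--Campana. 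Similarly, a canonical fibration with positive-dimensional base only yields $f\circ\varphi=f$ after one controls the induced dynamics on the base; passing to an iterate is permissible, but the base action need not be trivial, so this step needs the known results for positive Kodaira dimension rather than mere functoriality.

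Separately, your description of how the semiabelian case is handled does not match the paper at all. Theorem~\ref{main result} is not proved by a $\mathfrak p$-adic contraction at a torsion periodic point with multiplicatively independent eigenvalues inside the formal group --- that mechanism is the Amerik--Bogomolov--Rovinsky criterion, which requires hypotheses on the differential at a fixed point that are simply unavailable here and is cited in the paper only as prior work. The actual proof writes $\varphi(x)=\tau(x)+y$ (rigidity of semiabelian varieties), splits $G$ up to isogeny as $G_1+G_2$ using the minimal polynomial of $\tau$ (with $\tau-\id$ an isogeny on $G_1$ and nilpotent on $G_2$), detects the invariant fibration as a quotient $G\to G/(G_1+B)$ when the subgroup $B$ generated by $(\tau-\id)(G_2)$ and $y_2$ is proper, and otherwise builds a dense orbit by purely algebraic and Galois-theoretic means: Vojta's semiabelian Mordell--Lang theorem to control orbit closures, the anti-equivalence with lattices over dual abelian varieties to produce minimal dominating semiabelian subvarieties, Hilbertianity (Thornhill) to construct points avoiding all proper algebraic subgroups, and a combinatorial lemma on polynomial sequences in finitely generated groups. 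If you intend to engage with the semiabelian case, that is the argument to study; for the full conjecture, no amount of sharpening your outline closes the primitive case, which remains open.
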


The origin of \cite[Conjecture~7.14]{medvedev-scanlon} lies in a much older conjecture formulated by Zhang in the early 1990's (and published in \cite[Conjecture~4.1.6]{Zhang-lec}). Zhang asked that for each polarizable endomorphism $\varphi$ of a projective variety $X$ defined over $\Qbar$ there must exist a $\Qbar$-point with Zariski dense orbit under $\varphi$. Medvedev and Scanlon \cite{medvedev-scanlon} conjectured that as long as $\varphi$ does not preserve a nonconstant fibration, then a Zariski dense orbit must exist; the hypothesis concerning polarizability of $\varphi$ already implies that no nonconstant fibration is preserved by $\varphi$. In \cite{medvedev-scanlon}, they also prove their conjecture in the special case $X=\bA^n$ and $\varphi$ is given by the coordinatewise action of $n$ one-variable polynomials $(x_1,\dots, x_n)\mapsto (f_1(x_1),\dots, f_n(x_n))$; their result was established over an arbitrary field $K$ of characteristic $0$ which is not necessarily algebraically closed. 

In \cite{Amerik-Campana}, Amerik and Campana proved Conjecture~\ref{M-S conjecture} for all uncountable algebraically closed fields $K$ (see also \cite{BRS} for a proof of the special case of this result when $\varphi$ is an automorphism). In fact, Conjecture~\ref{M-S conjecture} is true even in positive characteristic, as long as the field $K$ is uncountable (see \cite[Corollary~6.1]{preprint}); on the other hand, when the transcendence degree of $K$ over $\Fp$ is smaller than the dimension of $X$, there are counterexamples to the corresponding variant of Conjecture~\ref{M-S conjecture} in characteristic $p$ (as shown in \cite[Example~6.2]{preprint}). 

With the notation as in Conjecture~\ref{M-S conjecture}, it is immediate to see that if $\varphi$ preserves a nonconstant fibration, then there is no Zariski dense orbit. So, the real difficulty in Conjecture~\ref{M-S conjecture} lies in finding a Zariski dense orbit for a self-map $\varphi$ of $X$, when the algebraically closed field $K$ is \emph{countable}; in this case, there are only a handful of results known, as we will briefly describe below.
\begin{itemize}
\item In \cite{Amerik-Bogomolov-Rovinsky}, Conjecture~\ref{M-S conjecture} was proven assuming there is a point $x\in X(K)$ which is fixed by $\varphi$ and moreover, the induced action of $\varphi$ on the tangent space of $X$ at $x$ has multiplicatively independent eigenvalues.
\item Conjecture~\ref{M-S conjecture} is known for varieties $X$ of positive Kodaira dimension, see for example \cite[Proposition~2.3]{preprint 2}.
\item In \cite{Xie-1}, Conjecture~\ref{M-S conjecture} was proven for all birational automorphisms of surfaces (see also \cite{BGT} for an independent proof of the case of automorphisms). Later, Xie \cite{Xie-2} established the validity of Conjecture~\ref{M-S conjecture} for all polynomial endomorphisms of $\bA^2$.
\item In \cite{preprint 2}, the conjecture was proven all smooth minimal $3$-folds of Kodaira dimension $0$ with sufficiently large Picard number, contingent on certain conjectures in the Minimal Model Program.
\item In \cite{GS-abelian}, Conjecture~\ref{M-S conjecture} was proven for all abelian varieties.
\item In \cite{G-X}, it was proven that if Conjecture~\ref{M-S conjecture} holds for the dynamical system $(X,\varphi)$, then it also holds for the dynamical system $(X\times \bA^n, \psi)$, where $\psi:X\times \bA^n\dashrightarrow X\times \bA^n$ is given by $(x,y)\mapsto (\varphi(x), A(x)y)$ and $A\in {\rm GL}_n(K(X))$.
\end{itemize}

Our Theorem~\ref{main result} extends the main result of \cite{GS-abelian} where Conjecture~\ref{M-S conjecture} was shown for abelian varieties. There are numerous examples in arithmetic geometry when one needed to overcome significant difficulties to extend a known result for abelian varieties to the case of semiabelian varieties: the case of non-split semiabelian varieties presented intrinsic complications in each of the classical conjectures of Mordell-Lang, Bogomolov, or Pink-Zilber. In the case of the Medvedev-Scanlon conjecture, the major technical obstacle we face is the absence of Poincar\'e's Reducibility Theorem: if $A$ is an abelian variety and $B\subset A$ is an abelian subvariety, then there exists an abelian subvariety $C\subset A$ such that $A=B+C$ and $B\cap C$ is finite, i.e.~$A/B$ is isogenous to an abelian subvariety of $A$. The corresponding version of this result is false for semiabelian varieties. Since Poincar\'e's Reducibility Theorem is used throughout \cite{GS-abelian}, our proof of Theorem \ref{main result} requires significant conceptual changes, specifically in the proofs of the main results of Subsections \ref{Minimal dominating semiabelian subvarieties}, \ref{Constructing topological generators}, and \ref{section proof}. Also, the absence of Poincar\'e's Reducibility Theorem in the case of non-split semiabelian varieties $G$ makes it impossible for one to use a similar strategy as in \cite{GS-abelian} in order to prove a generalization of Theorem~\ref{main result} when the action of $\varphi$ is replaced by the action of a finitely generated commutative monoid of regular self-maps on $G$; for more details, see Remark~\ref{commutative monoid}.

The plan of our paper is as follows. In Section~\ref{section semiabelian} we introduce our notation and state the various useful facts about semiabelian varieties which we will employ in our proof. We continue in Section~\ref{section results} by proving several reductions and auxilliary statements to be used in the proof of our main result. Finally, we conclude by proving Theorem~\ref{main result} in Section~\ref{section proof}.


\section{Properties of semiabelian varieties}
\label{section semiabelian}

\subsection{Notation} 
We start by introducing the necessary notation for our paper. 

Let $G_1$ and $G_2$ be abelian groups and let $G=G_1\times G_2$. By an abuse of notation, we identify $G_1$ as a subgroup of $G$ through the inclusion map $x\mapsto (x,0)$; similarly, we identiy $G_2$ with a subgroup of $G$ through the inclusion map $x\mapsto (0,x)$. Also, viewing $G$ as $G_1\oplus G_2$, then for any $x_1\in G_1$ and $x_2\in G_2$, we use either the notation $(x_1,x_2)$ or $x_1\oplus x_2$ for the element $(x_1,x_2)\in G$. 
For any group $G$, we denote by $G_{\tor}$ its torsion subgroup; also, if $G$ is abelian, then (unless otherwise noted) we denote its group operation by ``$+$''.

\subsection{Semiabelian varieties} 
We continue by stating some useful facts regarding semiabelian varieties. Unless, otherwise noted, $G$ denotes a semiabelian variety defined over an algebraically closed field $K$ of characteristic $0$.

The following structure result for regular self-maps on semiabelian varieties is proven in \cite[Theorem~5.1.37]{Noguchi-Winkelmann}.  
\begin{fact}
\label{rigidity}
Let $G_1$ and $G_2$ be  semiabelian varieties and let 
$\varphi:G_1\lra G_2$. Then there exists a group homomorphism $\tau:G_1\lra G_2$ and there exists $y\in G_2$ such that $\varphi(x)=\tau(x)+y$ for each $x\in G_1$.
\end{fact}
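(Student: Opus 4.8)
We sketch the argument; the result is classical. The plan is to reduce to the case $\varphi(0)=0$ and show that then $\varphi$ is a group homomorphism: once this is known, setting $y:=\varphi(0)$ and $\tau(x):=\varphi(x)-\varphi(0)$ gives the statement. Throughout, write $0\to T_i\to G_i\to A_i\to 0$ for the canonical exact sequences attached to $G_i$, with projection $\pi_i\colon G_i\to A_i$ onto the abelian part and $T_i\cong\mathbb{G}_m^{n_i}$ the toric part. We will use two standard inputs: (i) every morphism from a torus to an abelian variety is constant — reducing coordinatewise to $\mathbb{G}_m$, a morphism $\mathbb{G}_m\to A$ extends to $\mathbb{P}^1\to A$ by the valuative criterion of properness and is hence constant; and (ii) the Künneth formula for units: for geometrically integral varieties $X,Y$ over $K$ with $K$-points, $\OO(X\times Y)^{\times}$ is generated by $K^{\times}$ together with the pullbacks of $\OO(X)^{\times}$ and $\OO(Y)^{\times}$.

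First we would show that $\pi_2\circ\varphi\colon G_1\to A_2$ descends to a morphism $\alpha\colon A_1\to A_2$ with $\alpha(0)=0$: restricted to any coset $g+T_1$ the composition $\pi_2\circ\varphi$ is a morphism from $T_1$ to $A_2$, hence constant by (i), so $\pi_2\circ\varphi$ is constant along the fibres of the quotient map $G_1\to G_1/T_1=A_1$ and therefore factors through it. By the classical rigidity theorem for abelian varieties, $\alpha$ is a group homomorphism, so $\pi_2\circ\varphi=\alpha\circ\pi_1$ is a homomorphism $G_1\to A_2$. Next, consider the morphism
\[
h\colon G_1\times G_1\lra G_2,\qquad h(x,y):=\varphi(x+y)-\varphi(x)-\varphi(y),
\]
which measures the failure of $\varphi$ to be a homomorphism and satisfies $h(0,y)=h(x,0)=0$. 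Since $\pi_2\circ\varphi$ is a homomorphism we get $\pi_2\circ h=0$, so $h$ factors through $\ker(\pi_2)=T_2\cong\mathbb{G}_m^{n_2}$. Each coordinate $h_j\colon G_1\times G_1\to\mathbb{G}_m$ is then a unit on $G_1\times G_1$ equal to $1$ on $\{0\}\times G_1$ and on $G_1\times\{0\}$; by (ii) we may write $h_j(x,y)=c\,a(x)b(y)$ with $c\in K^{\times}$ and $a,b\in\OO(G_1)^{\times}$, and specializing $x$ and $y$ to $0$ forces $a$, $b$, and then $c$ to be trivial. Hence $h\equiv 0$, i.e.\ $\varphi$ is a homomorphism.

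The main obstacle is that the usual proof of rigidity statements for abelian varieties (the rigidity lemma: a morphism $X\times Y\to Z$ with $X$ complete that is constant on one fibre $X\times\{y_0\}$ factors through the projection to $Y$) breaks down here, because a semiabelian variety is not complete — indeed there are non-homomorphic morphisms such as $\mathbb{G}_m\to\mathbb{G}_a$, $t\mapsto t+t^{-1}$. The key idea is therefore to split the problem along $0\to T\to G\to A\to 0$: the abelian part is handled by genuine completeness (classical abelian rigidity), while the obstruction $h$ is pushed into the toric part, where completeness is replaced by the rigidity of $\mathbb{G}_m$ expressed through its units (input (ii)). Verifying that $\pi_2\circ\varphi$ really does descend to $A_1$ — that is, that $\varphi$ is constant in the toric directions modulo $T_2$ — and setting up input (ii) for the product $G_1\times G_1$ are the two points that require care.
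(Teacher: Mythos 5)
The paper does not prove this Fact at all: it is quoted directly from \cite[Theorem~5.1.37]{Noguchi-Winkelmann}, so there is no in-paper argument to compare against. Your proposal is a correct, self-contained proof along classical lines. The two inputs you invoke are genuine theorems: (i) is the standard consequence of the valuative criterion plus the absence of rational curves on abelian varieties, and (ii) is Rosenlicht's unit theorem, which applies to arbitrary geometrically integral varieties (not just affine ones), so it is legitimate to apply it to $G_1\times G_1$. The two delicate points you flag are indeed the right ones, and both go through: since $K$ is algebraically closed of characteristic $0$ and all schemes involved are reduced, the pointwise constancy of $\pi_2\circ\varphi$ on $T_1$-cosets upgrades to $T_1$-invariance of the morphism, which then descends along the fppf torsor $G_1\to A_1$; and $h$ lands scheme-theoretically in the (smooth) closed subgroup $T_2$ because its source is reduced. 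The specialization of $h_j(x,y)=c\,a(x)b(y)$ at $x=0$ and $y=0$ correctly kills all three factors. The only cosmetic caveat is that $T_i\cong\bG_m^{n_i}$ uses that $K$ is algebraically closed, which is the setting of the paper. For reference, the proof in \cite{Noguchi-Winkelmann} is organized around the (quasi-)Albanese property of semiabelian varieties rather than around Rosenlicht's unit theorem, but your route is equally standard and arguably more elementary.
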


By definition (see \cite[Definition~5.1.20]{Noguchi-Winkelmann} and \cite[Fact~2.4]{Benoist}), a semiabelian variety over $K$ is a commutative algebraic group $G$ over $K$ for which there is 
an algebraic torus $T$, an abelian variety $A$, and a short exact sequence of algebraic groups over $K$:
\begin{equation}
\label{presentation semiabelian}
0\lra T\lra G\lra A\lra 0
\end{equation}
We often say that $T$ is the \emph{toric part} of $G$, while $A$ is the \emph{associated abelian variety} of $G$. When the short exact sequence \eqref{presentation semiabelian} splits, we say that $G$ is a \emph{split} semiabelian variety.

The next fact will be used several times in our proof.

\begin{fact}
\label{torus not abelian}
There is no nontrivial group homomorphism between an algebraic torus and an abelian variety. 
\end{fact}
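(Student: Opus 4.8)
The plan is to treat the two directions separately, in each exploiting the basic tension between the \emph{affineness} of a torus and the \emph{completeness} (projectivity) of an abelian variety.

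First, suppose $f\colon T\to A$ is a group homomorphism from an algebraic torus $T$ to an abelian variety $A$. Since $K$ is algebraically closed of characteristic $0$, we have $T\cong \bG_m^n$ for some $n\ge 0$, and it suffices to show that $f$ restricted to each coordinate copy of $\bG_m$ inside $T$ is trivial. Such a restriction is a morphism $\bG_m\to A$; since every rational map from a smooth curve to an abelian variety extends to a morphism, it extends to a morphism $\bP^1\to A$. But $A$ contains no rational curves, so this morphism is constant, and hence $f$ is trivial. (Alternatively: the image $f(T)$ is a closed subgroup of $A$, hence complete, while being a quotient of $T$ it is also affine; a connected algebraic group that is both affine and complete is trivial, and $f(T)$ is connected since $T$ is.)

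Second, suppose $g\colon A\to T$ is a group homomorphism from an abelian variety to a torus. Composing with the coordinate projections $T\cong\bG_m^n\to\bG_m$, it suffices to treat $T=\bG_m$. Since $\bG_m$ is an open subscheme of $\bA^1$, the map $g$ is then given by a global regular function on $A$; as $A$ is complete and connected, every such function is constant, so $g$ is constant, and being a group homomorphism it is the trivial map. (Again one may instead observe that $g(A)$ is a closed, hence complete, subgroup of $T$, yet affine as a subgroup of a torus, so it is trivial by the same affine-plus-complete argument.)

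Both directions are essentially formal given the standard structure theory; the only point requiring a word of justification is the assertion that a connected affine complete algebraic group is trivial, which follows either from the absence of nonconstant global regular functions on a complete variety or from the absence of rational curves on an abelian variety. I do not anticipate any genuine obstacle here.
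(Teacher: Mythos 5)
Your proof is correct. The paper states Fact~\ref{torus not abelian} without proof, treating it as standard background, so there is no argument in the text to compare against; the two justifications you give (a connected algebraic group that is both affine and complete is trivial, and a complete connected variety has no nonconstant global regular functions) are exactly the standard ones and both are sound.
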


As a consequence, we have the following: suppose $\sigma\colon G_1\lra G_2$ is a group homomorphism of semiabelian varieties and
\[
0\lra T_i\lra G_i\stackrel{p_i}{\lra} A_i\lra 0
\]
is a short exact sequence with $T_i$ the toric part of $G_i$ and $A_i$ the associated abelian variety of $G_i$. Then $p_2(\sigma(T_1))=0$, so we have:

\begin{fact}
\label{morphism fact}
Let $G_1$ and $G_2$ be semiabelian varieties with toric parts $T_1$ and $T_2$, respectively associated abelian varieties $A_1$ and $A_2$. Then for any group homomorphism $\sigma:G_1\lra G_2$, the restriction $\sigma|_{T_1}$ induces a group homomorphism between $T_1$ and $T_2$; furthermore there is an induced group homomorphism $\overline{\sigma}:A_1\lra A_2$. 
\end{fact}

Thus, we see that morphisms of semiabelian varieties induce morphisms of their corresponding tori and associated abelian varieties. There is a converse to this statement as well. If $G$ is a semiabelian variety and $p\colon G\to A$ is the quotient map to its associated abelian variety, then $p$ is a $T$-torsor, and hence $G$ is the relative spectrum of $\bigoplus_{m\in M}\L_m$ where $\L_m$ are line bundles and $M$ is the character lattice of $T$. One shows, see e.g.~\cite[Corollary 3.1.4.4]{kai-wen-thesis}, that for all $m,m'\in M$ we have $\L_m\otimes\L_{m'}\simeq\L_{m+m'}$ and each $\L_m\in\Pic^0(A)=A^\vee$. In other words, we have a group homomorphism $c\colon M\to A^\vee$. If $\sigma\colon G'\to G$ is a group homomorphism, then from Fact~\ref{morphism fact} we have homomorphisms $\sigma|_{T'}\colon T'\to T$ and $\psi=\overline\sigma\colon A'\to A$ between the toric parts and associated abelian varieties. This in turn, induces a homomorphism $\phi\colon M\to M'$ between the character lattices of $T$ and $T'$, and a homomorphism $\psi^\vee\colon A^\vee\to (A')^\vee$ between dual abelian varieties. Via these constructions, we obtain an equivalence of categories:

\begin{fact}[{\cite[Proposition 3.1.5.1]{kai-wen-thesis}}]
\label{antiequiv of cats}
The category of semiabelian varieties is anti-equivalent to the following category: objects are group homomorphisms $c\colon M\to A^\vee$ where $M$ is a finitely generated free abelian group and $A$ is an abelian variety;
morphisms of objects $(c\colon M\to A^\vee)\to(c'\colon M'\to (A')^\vee)$ consist of commutative diagrams
\[
\xymatrix{
M\ar[r]^-{c}\ar[d]^-{\phi} & A^\vee\ar[d]^-{\psi^\vee}\\
M'\ar[r]^-{c'} & (A')^\vee
}
\]
where $\phi$ is a group homomorphism and $\psi\colon A'\to A$ is a homomorphism of abelian varieties.
\end{fact}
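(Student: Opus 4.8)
The functor $F$ effecting the claimed anti-equivalence has already been written down in the discussion preceding the statement: on objects it sends $G$ to the homomorphism $c=c_G\colon M\to A^\vee$ extracted from the weight decomposition $p_*\OO_G=\bigoplus_{m\in M}\L_m$, and on a group homomorphism $\sigma\colon G'\to G$ it returns the pair $(\phi\colon M\to M',\ \psi=\overline\sigma\colon A'\to A)$ produced by Fact~\ref{morphism fact}. The plan is therefore to (i) verify that the squares appearing in the statement really do commute, so that $F$ lands in the stated category, (ii) show that $F$ is essentially surjective, and (iii) show that $F$ is fully faithful. The single tool that makes all three steps work is the Weil--Barsotti formula $\operatorname{Ext}^1(A,\GG_m)\cong\Pic^0(A)=A^\vee$ in the category of commutative algebraic groups, combined with additivity of $\operatorname{Ext}^1(A,-)$ in its argument and the vanishing $\operatorname{Hom}(A,\GG_m)=0$ (a special case of Fact~\ref{torus not abelian}); writing $T\cong\GG_m^{\,r}$ and $M=X^*(T)$, these assemble into a natural isomorphism $\operatorname{Ext}^1(A,T)\cong\operatorname{Hom}(M,A^\vee)$, natural in $A$ through $\psi\mapsto\psi^\vee$ and in $T$ through the character lattice. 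Under this isomorphism $c_G$ corresponds to the class of the extension $0\to T\to G\to A\to 0$, and $\L_m$ is the line bundle attached to the $\GG_m$-torsor obtained by pushing $G\to A$ forward along the character $m\colon T\to\GG_m$.

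Granting this dictionary, steps (i) and (ii) are quick. For (i), factoring a homomorphism $\sigma\colon G'\to G$ through $G'\to A'\times_A G$ (a $T'$-equivariant map over $A'$, with $T'$ acting via $\sigma|_{T'}$) and comparing weight pieces gives $\psi^*\L_m\simeq\L'_{\phi(m)}$ for every $m$, which in $\Pic^0(A')$ is exactly the commuting of the square $c'\circ\phi=\psi^\vee\circ c$. For (ii), given $c\colon M\to A^\vee$ let $T$ be the torus with $X^*(T)=M$ and let $G$ be the middle term of the extension of $A$ by $T$ whose class corresponds to $c$; then $G$ is a semiabelian variety, its toric part is $T$ (any torus in $G$ maps trivially to $A$ by Fact~\ref{torus not abelian}, hence lands in $T$), its associated abelian variety is $A=G/T$, and by the last sentence of the previous paragraph applying $F$ to $G$ returns $c$ on the nose; thus $F(G)\cong(c\colon M\to A^\vee)$.

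For (iii), faithfulness comes first. If $\sigma,\tau\colon G'\to G$ induce the same pair, then $\delta:=\sigma-\tau$ induces the zero pair; from $\overline\delta=0$ we get $\delta(G')\subseteq T$, and from the vanishing of the induced character map we get $\delta|_{T'}=0$, since a homomorphism of tori is determined by its action on characters, so $\delta$ factors through $G'/T'=A'$ and yields a homomorphism $A'\to T$, which vanishes by Fact~\ref{torus not abelian}; hence $\delta=0$. For fullness, given a commutative square $(\phi\colon M\to M',\ \psi\colon A'\to A)$ with $c'\circ\phi=\psi^\vee\circ c$, let $t\colon T'\to T$ be the torus homomorphism with character map $\phi$. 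Naturality of $\operatorname{Ext}^1(A,T)\cong\operatorname{Hom}(M,A^\vee)$ in both variables converts the commuting square into the equality of extension classes $\psi^*[G]=t_*[G']$ in $\operatorname{Ext}^1(A',T)$; consequently the pullback $A'\times_A G$ and the pushout $t_*G'$ are isomorphic as extensions of $A'$ by $T$, and the composite $G'\to t_*G'\isomto A'\times_A G\to G$ is a group homomorphism $\sigma$ which one checks satisfies $\sigma|_{T'}=t$ and $\overline\sigma=\psi$, i.e.\ $F(\sigma)=(\phi,\psi)$.

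The part I expect to be genuinely delicate — everything else being either cited or a diagram chase — is making the torsor/extension/line-bundle correspondence, and in particular its naturality in $A$ and in $T$ simultaneously, precise enough to support the two key assertions used above: that $F(G)$ recovers $c$ literally, and that a commuting square of the stated form carries exactly the same information as the equality $\psi^*[G]=t_*[G']$ of extension classes. (An alternative packaging that sidesteps some of this bookkeeping is to embed semiabelian varieties into Deligne's category of $1$-motives and read off the description there.) Beyond that, the argument rests only on the Weil--Barsotti formula and on $\operatorname{Hom}(A',T)=0$, which is Fact~\ref{torus not abelian}.
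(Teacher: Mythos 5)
The paper gives no proof of this statement: it is imported wholesale as \cite[Proposition 3.1.5.1]{kai-wen-thesis}, so there is nothing internal to measure your argument against. Judged on its own, your proof is correct and is in substance the standard derivation of the cited result. The engine is exactly what you identify: the Weil--Barsotti isomorphism $\operatorname{Ext}^1(A,\GG_m)\cong A^\vee$, extended by additivity in the second variable to a binatural isomorphism $\operatorname{Ext}^1(A,T)\cong\operatorname{Hom}(M,A^\vee)$, under which the class of $0\to T\to G\to A\to 0$ goes to $c_G$ and pullback/pushout of extensions go to $\psi^\vee\circ(-)$ and $(-)\circ\phi$. Your three verifications are sound: the weight-piece comparison $\psi^*\L_m\simeq\L'_{\phi(m)}$ gives commutativity of the square; essential surjectivity is immediate from the construction (and you correctly invoke Fact~\ref{torus not abelian} to see that $T$ is the full toric part of the constructed $G$, so the functor applied to it returns $c$ on the nose); faithfulness reduces via $\operatorname{Hom}(A',T)=0$ to the vanishing of $\delta=\sigma-\tau$; and for fullness the equality of classes $\psi^*[G]=t_*[G']$ produces an isomorphism of extensions of $A'$ by $T$ (unique, again by $\operatorname{Hom}(A',T)=0$, though for fullness alone existence suffices), whence the desired $\sigma$. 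The one caveat you flag yourself is the right one: the content of the cited proposition is precisely the careful bookkeeping that the torsor/line-bundle/extension dictionaries agree and are simultaneously natural in $A$ and in $T$; your write-up asserts this rather than proves it, which is acceptable for a proof sketch but is where all the remaining work lives.
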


From Fact~\ref{antiequiv of cats}, we see that if $G$ is a semiabelian variety corresponding to the homomorphism $c\colon M\to A^\vee$, then $\End(G)$ is the subring of $\End(T)\times\End(A)$ consisting of pairs $(\alpha,\psi)$ such that $c\circ\alpha^\vee=\psi^\vee\circ c$, where $\alpha^\vee\in\End(M)$ is the endomorphism of the character lattice induced by $\alpha$. So we have the following.

\begin{fact}
\label{endomorphism f.g.}
With the notation as in \eqref{presentation semiabelian}, we let $\End(T)$, $\End(G)$, and $\End(A)$ be the endomorphism rings of the corresponding algebraic groups. Then the endomorphism ring $\End(G)$ embeds into $\End(T)\times \End(A)$. In particular, $\End(G)$ is a finitely generated $\Z$-module. 
\end{fact}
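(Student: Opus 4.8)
The plan is to deduce the whole statement formally from the anti-equivalence of categories in Fact~\ref{antiequiv of cats}, together with Fact~\ref{morphism fact}, Fact~\ref{torus not abelian}, and the classical finiteness of $\End(A)$.

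First I would construct the claimed embedding. Given $\sigma\in\End(G)$, Fact~\ref{morphism fact} produces the restriction $\sigma|_{T}\in\End(T)$ and the induced endomorphism $\overline\sigma\in\End(A)$, and I claim the assignment $\sigma\mapsto(\sigma|_{T},\overline\sigma)$ is an injective ring homomorphism $\End(G)\to\End(T)\times\End(A)$. Compatibility with addition and composition is immediate from the functoriality of the assignments ``toric part'' and ``associated abelian variety'' — equivalently, from Fact~\ref{antiequiv of cats}, under which $\sigma$ corresponds to the commuting square whose vertical arrows are $(\sigma|_T)^\vee\in\End(M)$ and $\overline\sigma{}^\vee\in\End(A^\vee)$, and composition of such squares matches composition in $\End(M)\times\End(A)$. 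For injectivity, suppose $\sigma|_{T}=0$ and $\overline\sigma=0$. Since $\sigma$ kills $T$, it factors through the quotient $p\colon G\to A$, say $\sigma=\rho\circ p$ with $\rho\colon A\to G$ a homomorphism; then $p\circ\rho=\overline\sigma=0$, so $\rho$ maps $A$ into $\ker p=T$, and a homomorphism from an abelian variety to a torus is trivial by Fact~\ref{torus not abelian}. Hence $\rho=0$, so $\sigma=0$.

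Next I would check that the target is a finitely generated $\Z$-module. Writing $M$ for the character lattice of $T$, the ring $\End(T)$ is canonically identified with $\End(M)$, which is free of rank $(\dim T)^2$ over $\Z$. For $\End(A)$, I would invoke the standard fact that the endomorphism ring of an abelian variety is a finitely generated free $\Z$-module (it is torsion-free and $\End(A)\tensor\Q$ is a finite-dimensional $\Q$-algebra). Hence $\End(T)\times\End(A)$ is a finitely generated $\Z$-module. Since $\Z$ is Noetherian, the image of the embedding built in the first step — a submodule of a finitely generated $\Z$-module — is finitely generated, so $\End(G)$ is a finitely generated $\Z$-module, as claimed.

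There is no real obstacle here: the content is packaged in the cited structural facts, and the only point requiring a genuine (if short) argument is the injectivity of $\End(G)\hookrightarrow\End(T)\times\End(A)$, which I handle as above via Fact~\ref{torus not abelian}; everything else is bookkeeping.
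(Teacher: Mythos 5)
Your proof is correct and follows essentially the same route as the paper, which obtains the embedding $\End(G)\hookrightarrow\End(T)\times\End(A)$ directly from the anti-equivalence of Fact~\ref{antiequiv of cats} (identifying $\End(G)$ with the subring of pairs $(\alpha,\psi)$ compatible with $c$) and then concludes finite generation from the finiteness of $\End(T)\cong M_{\dim T}(\Z)$ and of $\End(A)$ together with Noetherianity of $\Z$. Your only variation is to verify injectivity by hand — factoring a $\sigma$ that kills $T$ through $A$ and applying Fact~\ref{torus not abelian} — rather than reading it off from the categorical identification; both arguments are valid.
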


\begin{fact}
\label{fact minimal poly}
Let $G$ be a semiabelian variety and $\varphi:G\lra G$ be a group homomorphism. Then there exists a monic polynomial $f\in\Z[z]$ of degree at most equal to $2\dim(G)$ such that $f(\varphi(x))=0$ for all $x\in G$. 

Moreover, for any $x\in G(K)$ and any regular self-map $\varphi:G\lra G$, the orbit  $\OO_{\varphi}(x)$ is contained in a finitely generated subgroup of $G$.
\end{fact}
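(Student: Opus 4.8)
Here is a plan for the proof.

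The plan is to establish the two assertions in turn: the first by decomposing $\varphi$ into its action on the toric part and on the associated abelian variety, and the second by reducing to the first via Fact~\ref{rigidity}. For the first assertion, write $0\to T\to G\to A\to 0$ with $t=\dim T$ and $a=\dim A$, so $\dim G=t+a$. By Fact~\ref{morphism fact}, $\varphi$ restricts to a group homomorphism $\varphi|_T\colon T\to T$ and induces $\overline\varphi\colon A\to A$. I would let $f_T\in\Z[z]$ be the characteristic polynomial of the integer matrix through which $\varphi|_T$ acts on the character lattice $X^*(T)\simeq\Z^t$: it is monic of degree $t$, and Cayley--Hamilton together with the faithfulness of the functor $T\mapsto X^*(T)$ gives $f_T(\varphi|_T)=0$ in $\End(T)$. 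Likewise I would let $f_A\in\Z[z]$ be the characteristic polynomial of $\overline\varphi$ acting on a Tate module $T_\ell A$: it is monic of degree $2a$, and by the classical fact that the characteristic polynomial of an endomorphism of an abelian variety annihilates it (equivalently, $\End(A)\hookrightarrow\End(T_\ell A)$), we get $f_A(\overline\varphi)=0$ in $\End(A)$. Now set $f:=f_T\cdot f_A\in\Z[z]$, which is monic of degree $t+2a\le 2(t+a)=2\dim G$. For any $x\in G$, the image of $f_A(\varphi)(x)$ in $A$ is $f_A(\overline\varphi)(\overline x)=0$, so $f_A(\varphi)(x)\in T$; applying $f_T(\varphi)$, which carries $T$ into $T$ and acts there as $f_T(\varphi|_T)=0$, yields $\bigl(f_T(\varphi)\circ f_A(\varphi)\bigr)(x)=0$. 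Since $f_T(\varphi)$ and $f_A(\varphi)$ lie in the commutative subring $\Z[\varphi]\subseteq\End(G)$, this composition is exactly $f(\varphi)$, so $f(\varphi)(x)=0$ for all $x\in G$, proving the first assertion.

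For the second assertion, let $\varphi\colon G\to G$ be an arbitrary regular self-map. By Fact~\ref{rigidity} we may write $\varphi(x)=\tau(x)+y$ with $\tau\colon G\to G$ a group homomorphism and $y\in G$ fixed, and a short induction gives $\varphi^n(x)=\tau^n(x)+\sum_{i=0}^{n-1}\tau^i(y)$ for all $n\ge0$. Applying the first assertion to $\tau$ produces a monic $f\in\Z[z]$ of degree $d$ with $f(\tau)=0$ in $\End(G)$, so $\Z[\tau]$ is generated as a $\Z$-module by $1,\tau,\dots,\tau^{d-1}$; consequently, for any $z\in G$ the subgroup $H_z:=\sum_{i=0}^{d-1}\Z\cdot\tau^i(z)$ is finitely generated and contains $\tau^n(z)$ for every $n\ge0$. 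Taking $z=x$ and $z=y$, we get $\tau^n(x)\in H_x$ and $\sum_{i=0}^{n-1}\tau^i(y)\in H_y$ for all $n$, whence $\OO_\varphi(x)\subseteq H_x+H_y$, a finitely generated subgroup of $G$.

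The inductions and the Cayley--Hamilton bookkeeping are routine; the one step that genuinely needs care is assembling $f_T$ and $f_A$ into a single polynomial annihilating $\varphi$ on all of $G$ rather than just on $T$ and $A$ separately, which is precisely why one first pushes $f_A(\varphi)(x)$ into $T$ and only then applies $f_T(\varphi)$. The sole external ingredient is the classical statement that the characteristic polynomial of an endomorphism of an abelian variety annihilates that endomorphism.
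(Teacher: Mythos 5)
Your proposal is correct and follows essentially the same route as the paper: both decompose $\varphi$ via the toric part and the associated abelian variety, take the product of the characteristic polynomial on $T$ (degree $\dim T$) with a monic annihilating polynomial for the induced endomorphism of $A$ (degree at most $2\dim A$), and then derive the ``moreover'' part from Fact~\ref{rigidity} exactly as in the paper. The only cosmetic difference is that the paper invokes the embedding $\End(G)\hookrightarrow\End(T)\times\End(A)$ to conclude directly, whereas you verify by hand that $f_A(\varphi)(x)$ lands in $T$ before applying $f_T(\varphi)$ --- a perfectly valid substitute.
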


\begin{proof}
For the first part, by Fact~\ref{endomorphism f.g.} it is enough to show that each $(\phi,\psi)\in\End(T)\times\End(A)$ satisfies a monic polynomial of degree at most $2\dim(G)$. Letting $d=\dim(T)$, we have $\End(T)\isomto M_d(\Z)$ the ring of $d$-by-$d$ matrices with integer entries. Then the matrix corresponding to $\phi$ satisfies its characteristic polynomial $g(z)$ which has degree $d$. By \cite[Fact~3.3]{GS-abelian}, we know that $\psi$ satisfies a monic polynomial $h(z)$ of degree at most $2\dim(A)$, so we can take $f=gh$.

We now prove the ``moreover'' statement. By Fact~\ref{morphism fact}, there exists $y\in G(K)$ and $\tau\in\End(G)$ such that $\varphi(x)=\tau(x)+y$ for any $x\in G$. Then for all $n\in\N$, we have
$$\varphi^n(x)=\tau^n(x)+y+\tau(y)+\cdots + \tau^{n-1}(y).$$
Since there exists a monic polynomial $f\in\Z[z]$ of degree at most $2\dim(G)$ such that $f(\tau)=0$, we conclude that $\OO_\varphi(x)$ is contained in the finitely generated subgroup of $G(K)$ spanned by $\tau^i(x)$ and $\tau^i(y)$ for $0\le i\le 2\dim(G)-1$. 
\end{proof}

For each positive integer $n$, we let $G[n]$ be the group of torsion points of $G$ killed by the multiplication-by-$n$ map on $G$. Then, as shown in \cite[Fact~2.9]{Benoist}, $G[n]\isomto (\Z/n\Z)^{\dim(T)+2\dim(A)}$ where $T$ and $A$ are the toric part, respectively the associated abelian variety of $G$; see \eqref{presentation semiabelian}. Therefore, similar to the case of abelian varieties (see \cite[Fact~3.10]{GS-abelian}), we obtain the following result.

\begin{fact}
\label{torsion extensions}
Let $G$ be a semiabelian variety defined over a field $K_0$ of characteristic $0$. 
Then the group $\Gal(K_0(G_{\tor})/K_0)$ embeds as a closed subgroup of $\GL_{\dim(T)+2\dim(A)}(\widehat{\Z})$, where $T$ and $A$ are the toric part, respectively the associated abelian variety of $G$, and $\widehat{\Z}$ is 
the ring of finite ad\'eles.
\end{fact}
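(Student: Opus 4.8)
The plan is to follow closely the proof of the abelian-variety analogue \cite[Fact~3.10]{GS-abelian}, the only semiabelian-specific ingredient being the isomorphism $G[n]\isomto(\Z/n\Z)^{d}$ with $d=\dim(T)+2\dim(A)$ recalled above from \cite[Fact~2.9]{Benoist}. First I would record the elementary facts that, for each positive integer $n$, the scheme $G[n]$ is finite over $K_0$, so every point of $G[n]$ is algebraic over $K_0$; that $\overline{K_0}=K_0^{\sep}$ since $\car K_0=0$; and that $K_0(G[n])/K_0$ is a finite \emph{Galois} extension, normality holding because any $K_0$-conjugate of a point of exact order $n$ again has exact order $n$ and hence lies in $G[n]$. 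Consequently $K_0(G_{\tor})=\bigcup_n K_0(G[n])$ is a (possibly infinite) Galois extension of $K_0$ with $\Gal(K_0(G_{\tor})/K_0)=\varprojlim_n\Gal(K_0(G[n])/K_0)$, the limit being taken over $n$ ordered by divisibility, with the restriction maps.

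Next, since the group law on $G$ and the multiplication-by-$n$ map are defined over $K_0$, the absolute Galois group $\Gal(\overline{K_0}/K_0)$ acts on $G[n]$ by group automorphisms. Choosing a $\widehat\Z$-basis of the Tate module $\widehat T(G):=\varprojlim_n G[n]$ — which is free of rank $d$ over $\widehat\Z$ because $\widehat T(G)\isomto\prod_\ell T_\ell(G)$ with each $\ell$-adic Tate module $T_\ell(G)$ free of rank $d$ over $\Z_\ell$, by the cited description of $G[n]$ — and reducing modulo $n$, one identifies $\Aut(G[n])$ with $\GL_d(\Z/n\Z)$ compatibly in $n$. This produces homomorphisms $\rho_n\colon\Gal(\overline{K_0}/K_0)\to\GL_d(\Z/n\Z)$; each $\rho_n$ factors through $\Gal(K_0(G[n])/K_0)$, and the induced map on that quotient is injective, since $K_0(G[n])$ is generated over $K_0$ by the coordinates of the points of $G[n]$, so an element acting trivially on $G[n]$ fixes $K_0(G[n])$ pointwise. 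Passing to the inverse limit over $n$, the $\rho_n$ assemble into $\rho\colon\Gal(K_0(G_{\tor})/K_0)\to\varprojlim_n\GL_d(\Z/n\Z)=\GL_d(\widehat\Z)$, which is injective because an inverse limit of injective maps is injective (equivalently, an automorphism of $K_0(G_{\tor})$ fixing every $G[n]$ pointwise must be the identity).

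Finally, for closedness I would invoke compactness: $\Gal(K_0(G_{\tor})/K_0)$ is profinite, hence compact; $\GL_d(\widehat\Z)$ is Hausdorff; and $\rho$ is continuous, being compatible with all the finite levels. A continuous injection from a compact space into a Hausdorff space is a homeomorphism onto its image, and that image is compact, hence closed; this realizes $\Gal(K_0(G_{\tor})/K_0)$ as a closed subgroup of $\GL_d(\widehat\Z)$, as desired. I do not anticipate any real obstacle; the only step deserving a line of care is the existence of the compatible identifications $G[n]\isomto(\Z/n\Z)^d$, i.e.\ the freeness of $\widehat T(G)$ over $\widehat\Z$, which is immediate from \cite[Fact~2.9]{Benoist}.
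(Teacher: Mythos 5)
Your proposal is correct and is essentially the argument the paper has in mind: the paper cites $G[n]\isomto(\Z/n\Z)^{\dim(T)+2\dim(A)}$ from \cite[Fact~2.9]{Benoist} and then asserts the result "similar to the case of abelian varieties," which is precisely the Galois representation on the adelic Tate module, injectivity at each finite level, and closedness of the image by compactness that you spell out. The one step you rightly flag --- compatibility of the identifications over $n$, i.e.\ surjectivity of the transition maps $G[m]\to G[n]$ for $n\mid m$ --- follows from divisibility of semiabelian varieties, so there is no gap.
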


The following result, proven by Faltings \cite{Faltings} for abelian varieties and by Vojta \cite{Vojta} for semiabelian varieties, was known as the Mordell-Lang conjecture.
\begin{fact}[Vojta \cite{Vojta}]
\label{Faltings theorem}
Let $V\subset G$ be an irreducible subvariety of the semiabelian variety $G$ defined over an algebraically closed field $K$ of characteristic $0$. Assume there exists a finitely 
generated subgroup $\Gamma\subset G(K)$ such that $V(K)\cap\Gamma$ is Zariski dense in 
$V$. Then $V$ is a coset of a semiabelian subvariety of $G$.
\end{fact}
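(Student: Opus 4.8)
\emph{Proof sketch (following Faltings and Vojta).} The plan is to recast the hypothesis as a statement about $S$-integral points on a compactification of $G$ and then run a Diophantine-approximation argument. First, one reduces to the case that $K$ is a number field. Indeed, $V$, $G$, and a finite generating set of $\Gamma$ are defined over a subfield $K_0\subseteq K$ that is finitely generated over $\Q$; since $\Gamma\subseteq G(K_0)$ we have $V(K)\cap\Gamma=V(K_0)\cap\Gamma$, and since $\Stab(V)$ is defined over $K_0$ the conclusion ``$V$ is a coset'' (equivalently $\dim V=\dim\Stab(V)$) is unchanged by replacing $K$ with $K_0$. If $\trdeg_\Q K_0>0$, standard specialization arguments together with the theory of the $K_0/k$-trace (Lang--N\'eron) let one induct on $\trdeg_\Q K_0$ and descend to $K_0$ a number field; this step is routine but not formal.

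Next, one passes to integral points. Using the line bundles $\L_m$, $m\in M$, attached to $G$ by Fact~\ref{antiequiv of cats}, one constructs an equivariant compactification $\bar G\to A$ whose fibres are smooth toric compactifications of $T$, with boundary divisor $D=\bar G\setminus G$; let $\bar V\subseteq\bar G$ be the Zariski closure of $V$. After enlarging a finite set $S$ of places of $K_0$ (including all archimedean ones), $G$ extends to a semiabelian scheme $\cG$ over $\OO_{K_0,S}$ with $\Gamma\subseteq\cG(\OO_{K_0,S})$, and $\cG(\OO_{K_0,S})$ is precisely the set of $S$-integral points of $\bar G$ relative to $D$. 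Hence $V(K_0)\cap\Gamma$ consists of $S$-integral points of $\bar V$, and it suffices to prove: if the $S$-integral points of $\bar V$ are Zariski dense, then $V$ is a coset of a sub-semiabelian variety.

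The core is a height/gap-principle argument. Assuming $V$ is not a coset, one may translate and divide by $\Stab(V)^0$ so that $\Stab(V)$ is finite and $\dim V<\dim G$. Equip $\bar G$ with a height $h$ mixing the canonical quadratic height $\hhat$ pulled back from the abelian quotient $A$ with the logarithmic Weil heights along the toric boundary. One then establishes a Vojta-type inequality: outside a fixed proper closed subset of $\bar V$, any $S$-integral point $P$ admits a previously constructed $S$-integral point $P'$ with $h(P')$ only slightly smaller than $h(P)$ and $h(P)\le\varepsilon\,h(P')+O_\varepsilon(1)$. Combined with Mumford's gap principle --- bounded angles in the Mordell--Weil lattice $A(K_0)$ on the abelian side, and bounded ``support ratios'' (the $S$-unit input, via the Evertse--Schlickewei--Schmidt subspace theorem) on the toric side --- this confines the $S$-integral points of $\bar V$ to finitely many fibres of a suitable projection, contradicting Zariski density unless $\dim V$ can be strictly lowered. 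Inducting on $\dim V$ then shows $\overline{V(K_0)\cap\Gamma}$ is a finite union of cosets, whence the irreducible $V$ is itself a coset.

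The main obstacle is this last step, with the \emph{mixed} height and a boundary divisor $D$ that is neither ample nor trivial on the fibres of $\bar G\to A$: Faltings' product theorem has to be applied to $\bar V$ inside a product of many copies of $\bar G$ while simultaneously controlling the noncompact toric directions, and it is exactly this fusion of the abelian mechanism (Faltings \cite{Faltings}) with the toric/$S$-unit mechanism that is Vojta's contribution in \cite{Vojta}. A conceptually different proof, valid also in positive characteristic, proceeds model-theoretically via the Zilber trichotomy in difference or differential fields; but the route above is the one closest to the cited reference.
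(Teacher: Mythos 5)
The paper does not prove Fact~\ref{Faltings theorem}: it is stated as a black box, with the citation to Vojta \cite{Vojta} serving as the entire justification, and that is the standard and appropriate treatment of the Mordell--Lang conjecture for semiabelian varieties in a paper that merely uses it. So your text is not being measured against an argument in the paper but against a long and deep paper of Vojta (building on Faltings \cite{Faltings}), together with the descent from finitely generated fields to number fields.

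As a roadmap of that proof your sketch is essentially faithful --- reduction to a number field, translation of $\Gamma$ into $S$-integral points on an equivariant compactification $\bar G$ (which is genuinely needed here, since $G(K_0)$ is not finitely generated when $G$ has a toric part), a Vojta-type height inequality, and an induction on $\dim V$ yielding a finite union of cosets. But it is a survey, not a proof: every step carrying actual content is named rather than established. Two flags. First, the descent from $\trdeg_{\Q}K_0>0$ to a number field is not ``routine but not formal'': for semiabelian varieties it requires the Lang--N\'eron trace and a genuine specialization argument (this is McQuillan's contribution; alternatively one invokes Hrushovski's model-theoretic proof), and the Fact as used in this paper, over an arbitrary algebraically closed $K$ of characteristic $0$ with $\Gamma$ finitely generated, really does rest on that step. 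Second, Vojta's argument does not route the toric directions through the Evertse--Schlickewei--Schmidt subspace theorem; the abelian and toric parts are treated simultaneously by his extension of the product-theorem method to the mixed height, which is exactly the portion you defer as ``the main obstacle.'' In short: nothing in your outline is wrong in spirit, but no proof has been supplied; for the purposes of this paper the citation is the correct and sufficient justification, and any attempt to reprove the Fact from scratch is out of scope.
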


Combining Fact~\ref{fact minimal poly} with Fact~\ref{Faltings theorem}, we obtain:

\begin{fact}
\label{fact closure orbit}
Let $\varphi:G\lra G$ be a self-map and let $x\in G(K)$. The Zariski closure of $\OO_{\varphi}(x)$ is a finite union of cosets of semiabelian subvarieties of $G$. 
\end{fact}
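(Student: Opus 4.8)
The plan is to combine the two preceding facts exactly as the surrounding text indicates, with Fact~\ref{fact minimal poly} supplying finite generation and Fact~\ref{Faltings theorem} (Mordell--Lang/Vojta) identifying the components. First I would apply the ``moreover'' part of Fact~\ref{fact minimal poly} to obtain a finitely generated subgroup $\Gamma\subseteq G(K)$ with $\OO_\varphi(x)\subseteq\Gamma$. Set $Z:=\overline{\OO_\varphi(x)}$, the Zariski closure, and let $Z=V_1\cup\cdots\cup V_r$ be its (finite) decomposition into irreducible components.

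The one point that requires a small argument is that for each $i$ the set $\OO_\varphi(x)\cap V_i$ is Zariski dense in $V_i$. If this failed, its closure $W_i$ would be a proper closed subset of $V_i$, and then $\OO_\varphi(x)\subseteq W_i\cup\bigcup_{j\neq i}V_j$; since the right-hand side is closed we would get $Z\subseteq W_i\cup\bigcup_{j\neq i}V_j$, hence $V_i\subseteq W_i\cup\bigcup_{j\neq i}V_j$. As $V_i$ is irreducible this forces $V_i\subseteq W_i$ or $V_i\subseteq V_j$ for some $j\neq i$, both impossible since $W_i\subsetneq V_i$ and the $V_j$ are the distinct irreducible components of $Z$.

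Finally, since $\OO_\varphi(x)\cap V_i\subseteq V_i(K)\cap\Gamma$, we conclude that $V_i(K)\cap\Gamma$ is Zariski dense in $V_i$, so Fact~\ref{Faltings theorem} applies and shows that each $V_i$ is a coset of a semiabelian subvariety of $G$. Therefore $Z=\bigcup_{i=1}^r V_i$ is a finite union of cosets of semiabelian subvarieties of $G$, which is the claim (note the case where $\OO_\varphi(x)$ is finite is included, each point being a coset of the trivial subvariety). I do not expect any genuine obstacle here: the substance is entirely carried by Facts~\ref{fact minimal poly} and~\ref{Faltings theorem}, and the only care needed is the elementary bookkeeping with irreducible components above.
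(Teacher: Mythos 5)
Your proposal is correct and follows essentially the same route as the paper: use the ``moreover'' part of Fact~\ref{fact minimal poly} to place the orbit in a finitely generated subgroup $\Gamma$, then apply Fact~\ref{Faltings theorem} to conclude each irreducible component of the closure is a coset of a semiabelian subvariety. The only difference is that you spell out the (routine) verification that the orbit is dense in each irreducible component, which the paper leaves implicit.
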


\begin{proof}
Using the ``moreover'' part provided by  Fact~\ref{fact minimal poly}, we see $\OO_{\varphi}(x)$ is contained in a finitely generated subgroup $\Gamma$ of $G(K)$. Letting $V$ be the closure of $\OO_{\varphi}(x)$ we see $V(K)\cap \Gamma$ is Zariski dense in $V$. Fact~\ref{Faltings theorem} then tells us that each irreducible component of $V$ is a coset of a semiabelian subvariety of $G$, which finishes the proof.
\end{proof}

Finally, we end with the following easy observation which will be used in Section~\ref{section results}.
\begin{fact}
\label{torus not abelian 2}
Let
\[
0\lra T\lra G\stackrel{p}{\lra} A\lra 0
\]
be a short exact sequence of algebraic groups with $T$ being a torus and $A$ an abelian variety. 
If $H\subset G$ is an algebraic subgroup such that $A=p(H)$, then $G/H$ is an algebraic torus. 
\end{fact}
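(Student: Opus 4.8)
The plan is to reduce this to the classical fact that the quotient of an algebraic torus by a closed algebraic subgroup is again a torus.

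The key observation is that $H+T=G$. Indeed, $p$ is surjective with kernel $T$, and $p(H)=A=p(G)$; so given $g\in G$ we may choose $h\in H$ with $p(h)=p(g)$, and then $g-h\in\ker(p)=T$, whence $g\in H+T$.

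Since $G$ is commutative, $H$ is a closed normal subgroup, so $G/H$ is an algebraic group, and the isomorphism theorem for commutative algebraic groups gives
\[
G/H \;=\; (H+T)/H \;\cong\; T/(T\cap H),
\]
where $T\cap H$ is a closed subgroup of the torus $T$. I would then conclude by recalling that $T/(T\cap H)$ is a torus: it is connected, being the image of the connected group $T$, and diagonalizable, being a quotient of a diagonalizable group (its character group embeds into the free abelian group $X^*(T)$, hence is itself free), and a connected diagonalizable group is a torus. Alternatively, since $G/H$ is a quotient of the semiabelian variety $G$ it is semiabelian, say fitting in $0\lra T'\lra G/H\lra A'\lra 0$; composing the surjection $T\surj G/H$ with $G/H\surj A'$ yields a surjective homomorphism from a torus onto $A'$, which by Fact~\ref{torus not abelian} forces $A'=0$, so that $G/H=T'$ is a torus.

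There is essentially no serious obstacle here: the entire content of the fact is the isomorphism $G/H\cong T/(T\cap H)$, which is immediate once one observes that $p(H)=A$ implies $H+T=G$, and the remaining assertion about quotients of tori is routine. The only points requiring a moment's care are purely categorical — namely that the isomorphism theorem and the formation of quotients behave as expected in the category of algebraic groups — and these are standard.
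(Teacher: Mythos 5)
Your proof is correct and is essentially the paper's own argument: both reduce the statement to the isomorphism $G/H\cong T/(T\cap H)$ (you via $H+T=G$ and the second isomorphism theorem, the paper via comparing the two short exact sequences after noting $H/(H\cap T)=A$) and then invoke the fact that quotients of tori are tori. Your extra justification of that last fact is fine but not needed.
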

\begin{proof}
We obtain the following diagram where the rows are short exact sequences and the vertical arrows are inclusions:
\[
\xymatrix{
1\ar[r] & H\cap T\ar[r]\ar@{^{(}->}[d] & H\ar[r]\ar@{^{(}->}[d] & H/(H\cap T)\ar[r]\ar@{^{(}->}[d] & 1\\
1\ar[r] & T\ar[r] & G\ar[r]^-{p} & A\ar[r] & 1
}
\]
Since $p(H)=A$, we see $H/(H\cap T)=A$. So, we have an isomorphism $T/(H\cap T)\simeq G/H$ which finishes the proof since quotients of tori are tori.
\end{proof}


\section{Useful results}
\label{section results}

In following subsections, we prove several propositions which will then be used in order to derive Theorem~\ref{main result}.

\subsection{Minimal dominating semiabelian subvarieties}
\label{Minimal dominating semiabelian subvarieties}

\begin{lemma}
\label{replacing by a conjugate}
It suffices to prove Theorem~\ref{main result} for a conjugate $\sigma^{-1}\circ \varphi\circ \sigma$ of the self-map $\varphi:G\lra G$ under some automorphism $\sigma\colon G\lra G$.
\end{lemma}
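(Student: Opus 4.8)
The plan is to check that both conclusions of Theorem~\ref{main result} are invariant under conjugation by an automorphism $\sigma\colon G\to G$, so that establishing the theorem for $\psi := \sigma^{-1}\circ\varphi\circ\sigma$ immediately yields it for $\varphi$. First I would observe the elementary orbit identity: for every $n\ge 0$ one has $\psi^n = \sigma^{-1}\circ\varphi^n\circ\sigma$, hence for any $x\in G(K)$,
\[
\OO_\psi(x) = \sigma^{-1}\bigl(\OO_\varphi(\sigma(x))\bigr).
\]
Since $\sigma$ is an automorphism of $G$ as an algebraic variety, it is in particular a homeomorphism for the Zariski topology, so $\OO_\psi(x)$ is Zariski dense in $G$ if and only if $\OO_\varphi(\sigma(x))$ is Zariski dense in $G$. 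Thus $\psi$ admits a point with dense orbit precisely when $\varphi$ does (with the point $\sigma(x)$ in place of $x$).

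Next I would treat the fibration alternative. Suppose $\psi$ preserves a nonconstant $g\in K(G)$, i.e.\ $g\circ\psi = g$. Set $f := g\circ\sigma^{-1}\in K(G)$; this is again a nonconstant rational function since $\sigma^{-1}$ is a birational (indeed biregular) automorphism of $G$, so it induces a field isomorphism $K(G)\to K(G)$. Then
\[
f\circ\varphi = g\circ\sigma^{-1}\circ\varphi = g\circ(\sigma^{-1}\circ\varphi\circ\sigma)\circ\sigma^{-1} = g\circ\psi\circ\sigma^{-1} = g\circ\sigma^{-1} = f,
\]
so $\varphi$ preserves the nonconstant fibration given by $f$. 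Combining the two paragraphs: if $\psi$ satisfies the dichotomy in Theorem~\ref{main result}, then so does $\varphi$. The converse direction is identical upon replacing $\sigma$ by $\sigma^{-1}$, which shows the reduction is in fact an equivalence, though only one direction is needed here.

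There is no serious obstacle in this lemma; it is a purely formal reduction. The only point worth a word of care is that $\sigma$ must be used as an automorphism of $G$ \emph{as a variety} (equivalently, of $K(G)$ as a $K$-algebra) rather than as a group automorphism — nothing in the argument uses that $\sigma$ respects the group law, which is exactly why the statement is phrased for an arbitrary automorphism $\sigma\colon G\to G$. I would close by noting that this freedom is what will later allow us to replace $\varphi$ by a conjugate chosen to have convenient structure (e.g.\ fixing a prescribed point, or respecting a chosen splitting when one exists), which is the purpose of isolating the reduction as a separate lemma.
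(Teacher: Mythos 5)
Your proof is correct and is essentially the argument the paper has in mind: the paper simply cites \cite[Lemma~5.4]{GS-abelian}, whose proof is exactly this formal transport of orbits via $\OO_\psi(x)=\sigma^{-1}(\OO_\varphi(\sigma(x)))$ and of invariant rational functions via $f=g\circ\sigma^{-1}$. Your closing remark that $\sigma$ need only be a variety automorphism (not a group automorphism) is also on point, since the lemma is later applied with $\sigma$ a translation.
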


\begin{proof}
This is \cite[Lemma~5.4]{GS-abelian}; the proof goes verbatim not only when $G$ is a semiabelian variety, but also for any quasiprojective variety.
\end{proof}

\begin{definition}
\label{def:min dominating subvariety}
Let $G$ be a semiabelian variety and
\begin{equation}
\label{presentation semiabelian 2}
0\lra T\lra G\stackrel{p}{\lra} A\lra 0
\end{equation}
the corresponding short exact sequence. We say $H\subset G$ is a \emph{minimal dominating semiabelian subvariety of} $G$ if: (i) $H$ is a semiabelian subvariety with $p(H)=A$ and (ii) for any semiabelian subvariety $H'\subset G$ with $p(H')=A$, we have $H\subset H'$.
\end{definition}

We show the existence of minimal dominating semiabelian subvarieties, after allowing for an isogeny.

\begin{lemma}
\label{min dominant subgp exists after finite cover}
For every semiabelian variety $G$, there exists an isogeny $f\colon G'\to G$ such that $G'$ has a minimal dominating semiabelian subvariety. 

Moreover, if $G=G_1\times G_2$ with the $G_i$ semiabelian varieties, then there exist isogenies $f_i\colon G'_i\to G_i$ such that $G'_1\times G'_2$ has a minimal dominating semiabelian subvariety.
\end{lemma}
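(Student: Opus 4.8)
The plan is to work on the side of the anti-equivalence of categories given by Fact~\ref{antiequiv of cats}, where semiabelian varieties correspond to homomorphisms $c\colon M\to A^\vee$. Under this correspondence, a semiabelian subvariety $H\subset G$ with $p(H)=A$ should correspond to a quotient $M\twoheadrightarrow M_H$ (surjectivity of $\phi$ reflecting that $H$ contains the toric direction appropriately) together with the induced $c_H\colon M_H\to A^\vee$; the condition $p(H)=A$ forces the abelian-variety part of the morphism $H\to G$ to be an isogeny, so after replacing $G$ by an isogenous $G'$ we may arrange it to be the identity on $A$. Thus the problem becomes: among all quotients $q\colon M\twoheadrightarrow N$ for which $c$ factors through $q$ (i.e. $c$ kills $\ker q$), find a \emph{largest} such quotient — equivalently the smallest subgroup $M_0\subset M$ with $M/M_0$ torsion-free such that $c|_{M_0}=0$ and $M_0$ is "saturated enough". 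The minimal dominating subvariety then corresponds to this largest quotient.

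Concretely, I would let $M_0\subset M$ be the subgroup generated by the kernels of all quotients $M\twoheadrightarrow N$ (with $N$ free) through which $c$ factors; equivalently $M_0 = \bigcap\{\, \text{saturated subgroups } M' : c|_{M'} \text{ is "trivial mod torsion"}\,\}$ — but the cleanest description is: let $M_0$ be the saturation in $M$ of the subgroup $\sum L$, the sum running over all saturated subgroups $L\subseteq M$ on which $c$ vanishes. The point is that the set of saturated subgroups $L$ with $c|_L=0$ is closed under sum up to saturation (because if $c$ vanishes on $L_1$ and $L_2$ then it vanishes on $L_1+L_2$, and vanishing on a subgroup is equivalent to vanishing on its saturation since $A^\vee$ is divisible — here is where I expect to have to be careful, as $c(x)$ being torsion for all $x$ in a saturated subgroup does not immediately give $c|_L=0$, so one may genuinely need to pass to an isogeny $G'\to G$ to kill these torsion ambiguities, which is exactly the "after allowing for an isogeny" in the statement). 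Having produced the largest quotient $M\twoheadrightarrow M/M_0=:N$ through which $c$ factors, Fact~\ref{antiequiv of cats} produces a semiabelian variety $H$ and a morphism $H\to G$ which is an isogeny onto its image in the direction of $A$; composing with a further isogeny of $G$ we get $H\hookrightarrow G'$ with $p'(H)=A$, and minimality of $M_0$ translates into property (ii) of Definition~\ref{def:min dominating subvariety}.

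For the "moreover" part, I would simply observe that the construction is functorial/compatible with products: if $G=G_1\times G_2$ corresponds to $c_1\colon M_1\to A_1^\vee$ and $c_2\colon M_2\to A_2^\vee$, then $G$ corresponds to $c_1\oplus c_2\colon M_1\oplus M_2\to A_1^\vee\oplus A_2^\vee = (A_1\times A_2)^\vee$, and the largest quotient of $M_1\oplus M_2$ through which $c_1\oplus c_2$ factors is the direct sum of the largest quotient of $M_1$ through which $c_1$ factors and the largest quotient of $M_2$ through which $c_2$ factors. Hence the minimal dominating subvariety of the isogenous model of $G_1\times G_2$ is the product of the minimal dominating subvarieties $H_1\subset G_1'$ and $H_2\subset G_2'$, and one takes $f_1\times f_2\colon G_1'\times G_2'\to G_1\times G_2$.

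The main obstacle I anticipate is the torsion/saturation bookkeeping on the character-lattice side: the naive "smallest saturated subgroup killed by $c$" need not exist as an honest kernel until one clears denominators, and one has to verify precisely which isogeny of $G$ makes the would-be minimal $H$ actually embed (rather than merely map with finite kernel) and satisfy $p(H)=A$ on the nose. I would isolate this as a lemma purely about a homomorphism $c\colon M\to A^\vee$ from a finitely generated free abelian group: there is a finite-index subgroup considerations / an isogeny $A^\vee\to A^\vee$ (dually an isogeny on $A$) after which the set of saturated subgroups of $M$ annihilated by $c$ has a unique maximal element, and then feed this through Fact~\ref{antiequiv of cats}. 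Everything else (translating "largest quotient through which $c$ factors" into "minimal semiabelian subvariety dominating $A$", and the product compatibility) should be formal consequences of the anti-equivalence.
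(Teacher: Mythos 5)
Your proposal follows essentially the same route as the paper: translate via Fact~\ref{antiequiv of cats} into factorizations of $c\colon M\to A^\vee$ through free quotients of $M$, observe that the obstruction to a universal such factorization is torsion/saturation, and remove it by an isogeny of $A$ (dually of $A^\vee$) leaving the torus untouched; the product case then follows because the construction respects direct sums. The one step you defer as a lemma is precisely where the paper does its only real work, and it is done very simply: choose a finite subgroup $\Gamma\subset A^\vee$ containing $\im(c)_{\tors}$ and replace $A^\vee$ by $A^\vee/\Gamma$, so that the new $c$ has torsion-free image, whence $M/\ker(c)$ is free and $\ker(c)$ is the unique maximal saturated subgroup annihilated by $c$ --- no sum-and-saturate process is needed. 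Finally, note a directional slip in your prose: the \emph{minimal} dominating subvariety corresponds to the \emph{smallest} free quotient of $M$ through which $c$ factors (largest kernel), not the largest quotient; your concrete construction (saturation of the sum of all annihilated saturated subgroups) is the correct one, but the surrounding ``largest quotient / smallest subgroup $M_0$'' phrasing says the opposite.
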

\begin{proof}
By Fact \ref{antiequiv of cats}, the semiabelian variety $G$ corresponds to a morphism $c\colon M\to A^\vee$ where $M$ is the character lattice of $T$. To begin, notice that a semiabelian subvariety $H_0\subset G$ has $p(H_0)=A$ (see \eqref{presentation semiabelian 2}) if and only if it induces a diagram
\[
\xymatrix{
1\ar[r]& T_0\ar[r]\ar@{^{(}->}[d]& H_0\ar[r]\ar@{^{(}->}[d]& A\ar[r]\ar@{=}[d]& 1\\
1\ar[r] & T\ar[r] & G\ar[r] & A\ar[r] & 1
}
\]
where the rows are short exact. By Fact \ref{antiequiv of cats}, this is equivalent to factoring $c$ as $M\to M_0\to A^\vee$ with $M\to M_0$ a surjection of free abelian groups. Therefore, a minimal dominating semiabelian subvariety exists if and only if $c$ factors as $M\stackrel{q}{\lra}\overline{M}\stackrel{\overline{c}}{\lra} A^\vee$ such that (i) $q$ is a surjection of free abelian group, and (ii) for all factorizations $M\stackrel{q_0}{\lra}M_0\stackrel{c_0}{\lra} A^\vee$ of $c$, there exists a surjection $q_1\colon M_0\to\overline{M}$ such that $q=q_1\circ q_0$ and $c_0=\overline{c}\circ q_1$. In particular, if the image $\im(c)$ is torsion-free, then a minimal dominating semiabelian subvariety exists.

Since $\im(c)\subset A^\vee$ is a subgroup, we see that the torsion part $\im(c)_{\tors}$ is a finite subgroup of $A^\vee$. Let $\Gamma$ be any finite subgroup of $A^\vee$ that contains $\im(c)_{\tors}$. Then $\im(c)_{\tors}=\im(c)\cap\Gamma$. Since $\Gamma$ is a finite subgroup, $\pi\colon A^\vee\to A^\vee/\Gamma$ is an isogeny of abelian varieties, and by construction the image of the map $\pi\circ c\colon M\to A^\vee/\Gamma$ is equal to $\im(c)/\im(c)_{\tors}$ which is torsion-free. Letting $A'=(A^\vee/\Gamma)^\vee$ and $\psi=\pi^\vee$, we have $\pi=\psi^\vee$ and $\psi\colon A'\to A$ is an isogeny, see e.g.~\cite[Theorem 9.1]{Milne}. By Fact \ref{antiequiv of cats}, we have a morphism of short exact sequences
\[
\xymatrix{
1\ar[r]& T\ar[r]\ar@{=}[d]& G'\ar[r]\ar[d]^-{f}& A'\ar[r]\ar[d]^-{\psi}& 1\\
1\ar[r] & T\ar[r] & G\ar[r] & A\ar[r] & 1
}
\]
where $G'$ is defined by $\pi\circ c\colon M\to A^\vee/\Gamma=(A')^\vee$. We see then that $f$ is an isogeny. Since the image of $\pi\circ c$ is torsion-free, $G'$ has a minimal dominating semiabelian subvariety.

Finally, it remains to handle the case when $G=G_1\times G_2$. Here, $G_i$ is defined by a map $c_i\colon M_i\to A_i^\vee$ with $M_i$ finitely generated free abelian groups and $A_i$ abelian varieties. Then $G$ is defined by the map $c=(c_1,c_2)\colon M_1\oplus M_2\to A_1^\vee\times A_2^\vee=(A_1\times A_2)^\vee$. Then $\im(c)=\im(c_1)\oplus\im(c_2)$ so $\im(c)_{\tors}=\im(c_1)_{\tors}\oplus\im(c_2)_{\tors}$. We can then choose $\Gamma=\Gamma_1\times\Gamma_2\subset A_1^\vee\times A_2^\vee$ where $\Gamma_i\subset A_i^\vee$ is a finite subgroup containing $\im(c_i)$. The resulting isogeny $\psi\colon A'\to A_1\times A_2$ defined by $\Gamma$ in the previous paragraph is then of the form $\psi=\psi_1\times\psi_2$ where $\psi_i\colon A'_i\to A_i$ is the isogeny defined by $\Gamma_i$.
\end{proof}

\begin{lemma}
\label{conditions for containing G1}
For $i=1,2$, let $G_i$ be a semiabelian variety fitting into a short exact sequence
\[
0\lra T_i\lra G_i\stackrel{p_i}{\lra} A_i\lra 0
\]
with $T_i$ a torus and $A_i$ an abelian variety. Let $G=G_1\times G_2$ and $p=(p_1,p_2)\colon G\lra A_1\times A_2$. If $H\subset G$ is an algebraic subgroup with $T_1\subset H$ and $p(H)=A_1\times A_2$, then $G_1\subset H$.
\end{lemma}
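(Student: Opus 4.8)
The plan is to work with the quotient $G/H$ rather than with $H$ directly, and to exploit that there are no nontrivial homomorphisms between tori and abelian varieties.

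First I would observe that $p$ sits inside a short exact sequence $0\lra T_1\times T_2\lra G\stackrel{p}{\lra} A_1\times A_2\lra 0$ in which $T_1\times T_2$ is a torus and $A_1\times A_2$ is an abelian variety; since the hypothesis gives $p(H)=A_1\times A_2$, Fact~\ref{torus not abelian 2} applies on the nose and shows that $G/H$ is an algebraic torus. Next, let $q\colon G\lra G/H$ be the quotient homomorphism. The hypothesis $T_1\subset H$ says that $q$ is trivial on $T_1$, so the restriction $q|_{G_1}\colon G_1\lra G/H$ factors through the quotient map $G_1\lra G_1/T_1\cong A_1$, yielding a homomorphism $\overline{q}\colon A_1\lra G/H$. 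This is a homomorphism from an abelian variety to a torus, hence $\overline{q}=0$ by Fact~\ref{torus not abelian}. Therefore $q|_{G_1}=0$, which is exactly the statement that $G_1\subset\ker q=H$.

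I do not expect a serious obstacle: once one passes to $G/H$ the argument is immediate, and the only point requiring (minor) care is checking that the hypotheses of Fact~\ref{torus not abelian 2} hold verbatim, which they do. For contrast, one could instead run a Goursat-type analysis of $H\subset G_1\times G_2$ — first showing that the projection $\operatorname{pr}_1(H)$ equals $G_1$ (using $T_1\subset\operatorname{pr}_1(H)$, that $\operatorname{pr}_1(H)$ dominates $A_1$, and that an algebraic group which is simultaneously a torus and an abelian variety is trivial), and then arguing that the resulting Goursat isomorphism has trivial source — but this is longer and less transparent, so I would not pursue it.
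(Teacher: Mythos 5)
Your proof is correct, and it takes a genuinely different route from the paper's. The paper first reduces to the case where $H$ is connected and then works entirely through the anti-equivalence of categories (Fact~\ref{antiequiv of cats}): it encodes $H$ as a factorization of $c=(c_1,c_2)\colon M_1\oplus M_2\to A_1^\vee\times A_2^\vee$ through a quotient lattice of the form $M_1\oplus M'$, and a small diagram chase on character lattices shows that $c_1$ factors through $H$, i.e.\ $G_1\subset H$. You instead stay on the group side: Fact~\ref{torus not abelian 2} applied to $0\to T_1\times T_2\to G\stackrel{p}{\to} A_1\times A_2\to 0$ gives that $G/H$ is a torus, the hypothesis $T_1\subset H$ lets $q|_{G_1}$ descend to a homomorphism $A_1\to G/H$, and Fact~\ref{torus not abelian} kills it, so $G_1\subset\ker q=H$. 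Your argument is shorter, needs no reduction to connected $H$, and avoids the lattice formalism entirely; the paper's version has the advantage of reusing the exact bookkeeping (subvarieties dominating $A$ correspond to factorizations of $c$) already set up in the proof of Lemma~\ref{min dominant subgp exists after finite cover}, so within the paper it costs essentially nothing. Both ultimately rest on the non-existence of nontrivial homomorphisms between tori and abelian varieties.
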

\begin{proof}
To prove the lemma, it suffices to replace $H$ by the connected component of the identity of $H$, and so we can assume $H$ is a semiabelian subvariety of $G$. By Fact \ref{antiequiv of cats}, we know that $G_i$ corresponds to a group homomorphism $c_i\colon M_i\lra A_i^\vee$ where $M_i$ is the character lattice of $T_i$. Then $G$ corresponds to the homomorphism $c=(c_1,c_2)\colon M_1\oplus M_2\lra A_1^\vee\times A_2^\vee$. Since $H$ is a semiabelian subvariety of $G$ and $p(H)=A_1\times A_2$, then as in the proof of Lemma \ref{min dominant subgp exists after finite cover}, we know that $H$ corresponds to a factorization $c'$ of $c$ through a quotient of $M_1\oplus M_2$. Moreover since $T_1\subset H$, the quotient is of the following form: there is a surjection $\pi\colon M_2\lra M'$ and $H$ corresponds to a group homomorphism $c'\colon M_1\oplus M'\lra A_1^\vee\times A_2^\vee$ such that $c=c'\circ(\id,\pi)$ where $\id$ is the identity map on $M_1$. Consider the following diagram
\[
\xymatrix{
M_1 \oplus M_2 \ar[r]^-{c} \ar[d]^-{(\id,\pi)} & A_1^\vee \times A_2^\vee\ar@{=}[d] \\
M_1 \oplus M' \ar[r]^-{c'} \ar[d]^-{\pi'} & A_1^\vee \times A_2^\vee\ar[d]^-{q} \\
M_1  \ar[r]^-{c_1} & A_1^\vee
}
\]
where $\pi'$ and $q$ are the natural projections. Since $(\id,\pi)$ is surjective and $c_1\circ\pi'\circ(\id,\pi)=q\circ c$, it follows that $c_1\circ\pi'=q\circ c'$. Since $c_1$ corresponds to the semiabelian subvariety $G_1\subset G$, we see $G_1\subset H$.
\end{proof}

\begin{prop}
\label{generic relative to a f.g. group}
Let $G_1$ and $G_2$ be semiabelian varieties defined over an algebraically closed field $K$ of characteristic $0$, let $G=G_1\oplus G_2$ and $\pi_i\colon G\to G_i$ be the natural projection maps. If $\Gamma\subset G_2(K)$ is a finitely generated subgroup, then there exists $x_1\in G_1(K)$ with the following property: for any proper algebraic subgroup $H\subset G$ and for any $\gamma\in\Gamma$, if $(x_1,\gamma)\in H$ then $\pi_2(H)$ is a proper algebraic subgroup of $G_2$.
\end{prop}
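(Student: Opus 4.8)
The plan is to reformulate the statement, reduce it to producing a sufficiently generic topological generator of $G_1$, and then construct such a point.

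\textbf{Reduction.} Call $x_1\in G_1(K)$ \emph{bad} if there exist $\gamma\in\Gamma$ and a proper algebraic subgroup $H\subsetneq G$ with $(x_1,\gamma)\in H$ and $\pi_2(H)=G_2$; we must find a non‑bad point, and since $(x_1,\gamma)\in H$ is equivalent to $\overline{\langle(x_1,\gamma)\rangle}\subseteq H$, only the countably many algebraic subgroups of $G$ are relevant. For a semiabelian variety $C$ put $\Gamma_C:=\sum_{\pi\in\operatorname{Hom}(G_2,C)}\pi(\Gamma)\subseteq C(K)$; by the material of Section~\ref{section semiabelian} (Facts~\ref{morphism fact} and~\ref{endomorphism f.g.}) the groups $\operatorname{Hom}(G_1,C)$ and $\operatorname{Hom}(G_2,C)$ are finitely generated, so $\Gamma_C$ is finitely generated. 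I claim it is enough to find $x_1\in G_1(K)$ such that \emph{(i)} $\overline{\langle x_1\rangle}=G_1$, and \emph{(ii)} $\mu(x_1)\notin\Gamma_C$ for every simple semiabelian variety $C$ (i.e.\ $C\cong\mathbb{G}_m$ or $C$ a simple abelian variety) which is a quotient of $G_2$ and every nonzero $\mu\in\operatorname{Hom}(G_1,C)$. Indeed, suppose such an $x_1$ were bad, witnessed by $H$ and $\gamma$. By (i), $x_1$ lies in no proper algebraic subgroup of $G_1$, so from $x_1\in\pi_1(H)$ we get $\pi_1(H)=G_1$; writing $M:=H\cap(\{0\}\times G_2)$ and $q_M\colon G_2\to G_2/M$ for the quotient map, the isomorphism $H/M\cong G_1$ induced by $\pi_1$ lets us write $H=\{(g_1,g_2):q_M(g_2)=\nu(g_1)\}$ for a homomorphism $\nu\colon G_1\to G_2/M$, and then $H\neq G$ and $\pi_2(H)=G_2$ say exactly $M\subsetneq G_2$ and $\nu$ surjective, while $(x_1,\gamma)\in H$ becomes $\nu(x_1)=q_M(\gamma)$. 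Since $\dim(G_2/M)\ge1$, choose a simple quotient $G_2/M\twoheadrightarrow C$ (a simple abelian quotient of its abelian part, or, using Fact~\ref{torus not abelian}, a $\mathbb{G}_m$‑quotient of its toric part when the abelian part is trivial); composing produces a nonzero $\mu\colon G_1\twoheadrightarrow C$ and a $\pi\colon G_2\twoheadrightarrow C$ with $\mu(x_1)=\pi(\gamma)\in\Gamma_C$, contradicting (ii). (Lemma~\ref{conditions for containing G1} can be used to streamline parts of this.) Moreover, by Fact~\ref{torus not abelian} a simple quotient of $G_2$ with nontrivial abelian part has trivial toric part, hence is a simple abelian quotient of the abelian part of $G_2$, while one with nontrivial toric part is a torus, hence $\mathbb{G}_m$; thus in (ii) the variety $C$ runs over only finitely many isomorphism classes.

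\textbf{Construction of $x_1$, generic case.} If $\trdeg(K/\Q)\ge\dim G_1$ — in particular whenever $K$ is uncountable — fix a finitely generated subfield $K_0\subseteq K$ over which $G_1,G_2,\Gamma$ are defined; then all algebraic subgroups of $G$ and all homomorphisms among its subquotients are defined over the countable field $\overline{K_0}$. Choose $x_1\in G_1(K)$ with $\trdeg(K_0(x_1)/K_0)=\dim G_1$: it lies on no proper $\overline{K_0}$‑subvariety of $G_1$, which gives (i), and for any nonzero $\mu\colon G_1\twoheadrightarrow C$ onto a simple quotient $C$ of $G_2$, the point $\mu(x_1)$ has transcendence degree $\dim C\ge1$ over $K_0$ whereas $\Gamma_C\subseteq C(\overline{K_0})$, so $\mu(x_1)\notin\Gamma_C$, giving (ii).

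\textbf{Construction of $x_1$, general case, and the main obstacle.} For arbitrary $K$ one must argue without transcendentals. Fix representatives $C_1,\dots,C_s$ of the simple quotients of $G_2$ admitting a nonzero homomorphism from $G_1$ and $\Z$‑bases $\mu^{(j)}_1,\dots,\mu^{(j)}_{n_j}$ of $\operatorname{Hom}(G_1,C_j)$; since every nonzero element of $\operatorname{Hom}(G_1,C_j)$ is surjective onto the simple $C_j$, condition (ii) is equivalent to asking that, for each $j$, the elements $\mu^{(j)}_1(x_1),\dots,\mu^{(j)}_{n_j}(x_1)$ be $\Z$‑linearly independent modulo $\Gamma_{C_j}$. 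The obstacle is that the bad set is only visibly a countable union of cosets of proper algebraic subgroups of $G_1$, and such a union can exhaust $G_1(K)$ when $K$ is countable (for instance $\mathbb{G}_m(\Qbar)$ is a countable union of finite sets). What rescues the argument is the coherence of these cosets: they all arise by pulling the \emph{single} finitely generated group $\Gamma$ back along the \emph{finitely many} homomorphisms to $C_1,\dots,C_s$. Concretely, after tensoring with $\Q$, the bad set lies inside the union, over the finitely many $j$ and over nonzero $\mu\in\operatorname{Hom}(G_1,C_j)$, of the kernels of the surjections $G_1(K)\otimes_\Z\Q\to (C_j(K)/\Gamma_{C_j})\otimes_\Z\Q$ onto infinite‑dimensional $\Q$‑vector spaces (infinite‑dimensionality coming from the infinite rank of $C_j(K)$ against the finitely generated $\Gamma_{C_j}$), and a general‑position (wedge‑product) argument shows that for each $j$ these kernels do not jointly cover $G_1(K)\otimes_\Z\Q$; combining the finitely many $j$, starting from a topological generator of $G_1$ so that (i) persists under $x_1\mapsto mx_1$, and treating torsion separately, one obtains the desired $x_1$. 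Carrying out this last step cleanly — i.e.\ the countable‑field case — is the technical heart of the subsection.
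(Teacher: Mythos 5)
Your reduction to finding $x_1$ with (i) $\overline{\langle x_1\rangle}=G_1$ and (ii) $\mu(x_1)\notin\Gamma_C$ for all nonzero $\mu\colon G_1\to C$ onto simple quotients $C$ of $G_2$ is a sensible reformulation, and your uncountable-field case is fine. But the proposal has a genuine gap exactly where the difficulty lies: the countable-field case is not proved, only gestured at, and you say so yourself. The sketch you offer is not sound as it stands. First, a countable union of proper $\Q$-subspaces \emph{can} cover a $\Q$-vector space (already $\Q^2$ is the union of its countably many lines), so ``the kernels do not jointly cover'' requires a real argument exploiting the specific structure of these kernels; that argument is precisely the Kummer-/Galois-theoretic content of \cite[Lemma~5.5]{GS-abelian}, which cannot be replaced by pure linear algebra over $\Q$. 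Second, even condition (i) --- a topological generator of $G_1$ over a countable field --- is itself a nontrivial statement of the same flavour (it is Proposition~\ref{existence of a point 2} with $\psi=\id$, $y=0$, proved via Hilbertianity). Third, your claim that the simple quotients $C$ of $G_2$ fall into finitely many isomorphism classes is false for the abelian part (an elliptic curve over $\Qbar$ has infinitely many pairwise non-isomorphic simple quotients $E/C$); this is repairable by working up to isogeny, but as written the ``fix representatives $C_1,\dots,C_s$'' step does not cover all the $C$ arising in your reduction.

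For comparison, the paper does not pass to simple quotients at all. It first reduces (via Lemma~\ref{it suffices for finite cover lemma} and Lemma~\ref{min dominant subgp exists after finite cover}) to the case where $G$ admits a minimal dominating semiabelian subvariety $H_0$, then chooses $x_1=t_0+x_{1,0}$ in two pieces: $x_{1,0}$ is a lift of a point of $A_1$ supplied by the already-proved abelian-variety case \cite[Lemma~5.5]{GS-abelian}, and $t_0\in T_1$ is chosen by the elementary multiplicative-independence statement Lemma~\ref{generic relative to a f.g. group torus} applied in the torus $G/H_0$. The verification then shows successively $p(H)=A$, hence $H_0\subset H$, hence $T_1\subset H$, hence $G_1\subset H$ by Lemma~\ref{conditions for containing G1}, a contradiction. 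So the hard arithmetic input is imported for the abelian quotient and is elementary for the toric part; the new work is in gluing the two, which is the step your proposal would still have to supply from scratch.
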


In our proof for Proposition~\ref{generic relative to a f.g. group} we will use the following related result.

\begin{lemma}
\label{generic relative to a f.g. group torus}
Let $T$ be an algebraic torus, let $T_0\subset T$ be a subtorus, and let $\Gamma_0\subset T(K)$ be a finitely generated subgroup. Then there exists $y_0\in T_0(K)$ such that given any algebraic subgroup $H_0\subset T$, if there exists $\gamma_0\in\Gamma_0$ such that $y_0\cdot \gamma_0\in H_0(K)$ then $T_0\subset H_0$.
\end{lemma}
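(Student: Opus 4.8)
The plan is to reduce everything to a statement about the character lattice $M$ of $T$, where the problem becomes purely combinatorial/number-theoretic about multiplicative independence. Writing $d=\rank(M)$, a point $y\in T(K)$ is the same as a homomorphism $M\to K^*$, an algebraic subgroup $H_0\subset T$ corresponds to a saturated sublattice (more precisely a quotient $M\surj M/N$ with $H_0=\ker(T\to \Spec K[N^{\perp}])$, i.e.\ $H_0$ is cut out by the characters in some subgroup $N\subset M$), and the subtorus $T_0$ corresponds to a quotient $M\surj M_0$; explicitly $y\in H_0(K)$ iff $\chi(y)=1$ for every $\chi\in N$, and $T_0\subset H_0$ iff $N$ is contained in the kernel of $M\surj M_0$, i.e.\ iff every $\chi\in N$ is trivial on $T_0$. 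So the containment $T_0\subset H_0$ fails exactly when there is a character $\chi\in M$ which is nontrivial on $T_0$ but with $\chi(y_0\gamma_0)=1$, i.e.\ $\chi(y_0)=\chi(\gamma_0)^{-1}$.

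First I would set up this dictionary carefully and then observe that it suffices to find $y_0\in T_0(K)$ avoiding, for every $\gamma_0\in\Gamma_0$, the ``bad'' equations $\chi(y_0)\chi(\gamma_0)=1$ with $\chi$ running over characters of $T$ that restrict nontrivially to $T_0$. Because $H_0$ is determined by finitely many such characters (a generating set of $N$) and we only need to prevent $T_0\subset H_0$, it is in fact enough to ensure: for each fixed $\gamma_0\in\Gamma_0$ and each $\chi\in M$ with $\chi|_{T_0}\neq 1$, we have $\chi(y_0)\neq \chi(\gamma_0)^{-1}$. Now $\Gamma_0$ is finitely generated, so the values $\chi(\gamma_0)$, as $\gamma_0$ ranges over $\Gamma_0$ and $\chi$ over $M$, all lie in a finitely generated subgroup $\Lambda\subset K^*$. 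Since $K$ is algebraically closed of characteristic $0$, it has infinite transcendence degree issues only if it is not large enough — but here we do not need transcendence: we may instead choose the coordinates of $y_0$ (with respect to an identification $T_0\cong \bG_m^{\dim T_0}$ after splitting) to be multiplicatively independent elements of $K^*$ that are also multiplicatively independent modulo $\Lambda$. Concretely, pick elements $t_1,\dots,t_{\dim T_0}\in K^*$ such that the subgroup they generate together with $\Lambda$ is free of rank $\dim T_0 + \rank \Lambda$; this is possible in any algebraically closed field of characteristic $0$ since $\oQ^*$ already contains infinitely many multiplicatively independent elements and $K\supseteq \oQ$. Then for any character $\chi$ nontrivial on $T_0$, $\chi(y_0)$ is a nontrivial monomial in the $t_i$, hence lies outside $\Lambda$, so in particular $\chi(y_0)\neq\chi(\gamma_0)^{-1}\in\Lambda$. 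This gives the desired $y_0$.

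The main obstacle — and the step needing the most care — is the bookkeeping translating "$H_0$ an arbitrary algebraic subgroup of $T$ (not necessarily connected!)" into the lattice picture, and confirming that $T_0\subset H_0$ can be tested on a single well-chosen character rather than needing to control all of $N$ simultaneously; the point is that it suffices to find \emph{one} character $\chi$ nontrivial on $T_0$ with $\chi(y_0\gamma_0)\ne 1$ only once we know such a $\chi$ lies in $N$, so I actually need the contrapositive: if $y_0\gamma_0\in H_0$ then $\chi(y_0\gamma_0)=1$ for \emph{all} $\chi\in N$, and I must show $N$ annihilates $T_0$, i.e.\ every $\chi\in N$ is trivial on $T_0$ — which follows from the multiplicative-independence choice of $y_0$ applied to each generator of $N$. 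The remaining points (passing between $T$ and $\bG_m^d$ after choosing a splitting, handling disconnected $H_0$ by noting $H_0$ is still defined by a subgroup $N\subset M$ via $K[M]/(\chi-1:\chi\in N)$, and verifying a finitely generated subgroup of $K^*$ can be enlarged to a free group of any finite rank inside $\oQ^*$) are routine in characteristic $0$.
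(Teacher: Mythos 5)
Your proposal is correct and takes essentially the same route as the paper: after splitting $T$, one chooses the coordinates of $y_0$ to be multiplicatively independent modulo the finitely generated subgroup of $K^*$ generated by the character values of $\Gamma_0$ (possible since $K^*$ has infinite rank), so that any monomial relation forced by $y_0\cdot\gamma_0\in H_0$ must involve a character trivial on $T_0$. The paper phrases this in explicit coordinates on $\bG_m^n$ rather than via the character lattice, but the argument is the same.
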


\begin{proof}
Since $K$ is algebraically closed, then $T$ splits and so, without loss of generality, we may assume $T=\bG_m^n$ and $T_0=\bG_m^{n_0}$ for some integers $n_0\le n$.  We let $\Gamma_{0,0}\subset \bG_m(K)$ be the finitely generated subgroup spanned by all the coordinates of a finite set of generators of $\Gamma_0$. Then we simply pick $y_0:=(y_{0,1},\dots, y_{0,n_0})\in \bG_m^{n_0}(K)$ with the property that for any nontorsion $\gamma_{0,0}\in\Gamma_{0,0}$ (i.e., $\gamma_{0,0}$ is not a root of unity) we have that $y_{0,1},\dots, y_{0,n_0},\gamma_{0,0}$ are multiplicatively independent. Since $\Gamma_{0,0}$ has finite rank, while $\bG_m(K)$ has infinite rank, we can always do this. 

Now, any algebraic subgroup $H_0\subset \bG_m^n$ is the zero locus of finitely many equations of the form 
\begin{equation}
\label{torus 0}
x_1^{m_1}\cdot \cdots \cdot x_n^{m_n}=1.
\end{equation}
for some integers $m_1,\dots, m_n$. 
Now, if there exists some $\gamma_{0}\in\Gamma_0$ such that $y_0\cdot \gamma_0\in H_0(K)$, then \eqref{torus 0} yields that 
\begin{equation}
\label{torus 1}
y_{0,1}^{m_1}\cdots y_{0,n_0}^{m_{n_0}}\in \Gamma_{0,0}.
\end{equation}
Our choice of $y_{0,1},\dots, y_{0,n_0}$ yields that $m_1=\cdots = m_{n_0}=0$; therefore $\bG_m^{n_0}\subset H_0$, as desired.
\end{proof}

\begin{proof}[Proof of Proposition~\ref{generic relative to a f.g. group}.]
We first observe that it is enough to prove the desired conclusion when each $G_i$ is replaced by a finite cover.
\begin{lemma}
\label{it suffices for finite cover lemma}
It suffices to prove Proposition~\ref{generic relative to a f.g. group} after replacing each $G_i$ (for $i=1,2$) by a finite cover.
\end{lemma}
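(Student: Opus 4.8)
The plan is to pull the whole situation back along the product of the two covering isogenies. So suppose we are given isogenies $f_i\colon G_i'\to G_i$ (the finite covers of the statement) and that Proposition~\ref{generic relative to a f.g. group} is already known for $(G_1',G_2')$, i.e.\ for every finitely generated subgroup of $G_2'(K)$; we want to deduce it for $(G_1,G_2)$. Put $G'=G_1'\oplus G_2'$, let $f=f_1\times f_2\colon G'\to G$ (again an isogeny), and write $\pi_i'\colon G'\to G_i'$ for the projections, so that $\pi_i\circ f=f_i\circ\pi_i'$.

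Given a finitely generated $\Gamma\subset G_2(K)$, I would first produce a matching test group upstairs: since $K$ is algebraically closed, $f_2$ is surjective on $K$-points, so lifting a finite generating set of $\Gamma$ through $f_2$ gives a finitely generated $\Gamma'\subset G_2'(K)$ with $f_2(\Gamma')=\Gamma$. Apply the assumed case of the Proposition to $(G_1',G_2')$ and $\Gamma'$ to obtain $x_1'\in G_1'(K)$, and set $x_1:=f_1(x_1')\in G_1(K)$; the claim is that $x_1$ works for $(G_1,G_2,\Gamma)$. To check this, take a proper algebraic subgroup $H\subset G$ and $\gamma\in\Gamma$ with $(x_1,\gamma)\in H$, and pass to $H':=f^{-1}(H)$. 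This is an algebraic subgroup of $G'$, and it is proper: $f$ is surjective, so $f(H')=H\neq G$. Lifting $\gamma$ to some $\gamma'\in\Gamma'$ with $f_2(\gamma')=\gamma$ gives $f(x_1',\gamma')=(x_1,\gamma)\in H$, i.e.\ $(x_1',\gamma')\in H'$, so the known case yields that $\pi_2'(H')$ is a proper algebraic subgroup of $G_2'$.

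The final step is to push this properness back downstairs. I would verify, on $K$-points, the identity $\pi_2'(f^{-1}(H))=f_2^{-1}(\pi_2(H))$ --- a short diagram chase in which surjectivity of $f_1$ on $K$-points is used to eliminate the $G_1$-coordinate --- and then note that if $\pi_2(H)$ were all of $G_2$ we would get $\pi_2'(H')=f_2^{-1}(G_2)=G_2'$, contradicting properness; hence $\pi_2(H)\subsetneq G_2$, as required. The only points needing any care, and the closest thing to an obstacle here, are bookkeeping ones: that preimages and homomorphic images of algebraic subgroups are again (closed) algebraic subgroups, that ``proper'' is preserved in both directions by the surjections $f$, $f_1$, $f_2$, and the displayed identity of subgroups of $G_2'$. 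There is no deeper difficulty --- the content of the lemma is exactly that the assertion of Proposition~\ref{generic relative to a f.g. group} is insensitive to replacing the $G_i$ by isogenous covers.
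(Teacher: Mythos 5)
Your proposal is correct and follows essentially the same route as the paper: pull back along the product isogeny, lift $\Gamma$ and the test point, note that $f^{-1}(H)$ is a proper subgroup containing the lifted pair, and transfer properness of the second projection back down via surjectivity of the isogenies. The only cosmetic differences are that the paper takes $\widetilde\Gamma=\sigma_2^{-1}(\Gamma)$ (the full preimage, finitely generated since the kernel is finite) rather than lifting generators, and phrases the last step by pushing $\widetilde\pi_2(\widetilde H)$ forward along $\sigma_2$ instead of your preimage identity.
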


\begin{proof}[Proof of Lemma~\ref{it suffices for finite cover lemma}.]
For each $i=1,2$, we let $\widetilde{G}_i$ be a semiabelian variety, let $\sigma_i:\widetilde{G}_i\lra G_i$ be an isogeny, and $\widetilde{G}:=\widetilde{G}_1\oplus \widetilde{G}_2$. We also let $\sigma:=(\sigma_1,\sigma_2):\widetilde{G}\lra G$ and $\widetilde{\pi}_i:\widetilde{G}\lra \widetilde{G}_i$ be the natural projections maps onto each coordinate. 

Since $\sigma_2$ is an isogeny, $\widetilde{\Gamma}:=\sigma_2^{-1}(\Gamma)$ is a finitely generated subgroup of $\widetilde{G}_2(K)$. We assume that the conclusion of Proposition~\ref{generic relative to a f.g. group} holds for $\widetilde{G}=\widetilde{G}_1\oplus \widetilde{G}_2$ and $\widetilde{\Gamma}$. Thus there exists $\widetilde{x}_1\in \widetilde{G}_1(K)$ such that for any proper algebraic subgroup $\widetilde{H}$ of $\widetilde{G}$, if there exists some $\widetilde{\gamma}\in \widetilde{\Gamma}$ with $(\widetilde{x}_1,\widetilde{\gamma})\in H(K)$, then $\widetilde{\pi}_2(\widetilde{H})$ is a proper algebraic subgroup of $\widetilde{G}_2$. We claim that $x_1:=\sigma_1(\widetilde{x}_1)\in G_1(K)$ satisfies the conclusion of Proposition~\ref{generic relative to a f.g. group}.

Indeed, assume there exists some proper algebraic subgroup  $H$ of $G$ containing $(x_1,\gamma)$ for some $\gamma\in \Gamma$. Then letting $\widetilde{H}:=\sigma^{-1}(H)$, we see that $(\widetilde{x}_1,\widetilde{\gamma})\in \widetilde{H}(K)$ and moreover, since $\sigma$ is an isogeny, $\widetilde{H}$ is also a proper algebraic subgroup of $\widetilde{G}$. Using the property satisfied by $\widetilde{x}_1$, it follows that $\widetilde{\pi}_2(\widetilde{H})$ is a proper algebraic subgroup of $\widetilde{G}_2$. Since $\pi_2\circ \sigma = \sigma_2\circ \widetilde{\pi}_2$ and $\sigma_2$ is an isogeny, we see $\pi_2(H)$ must be a proper algebraic subgroup of $G_2$, as desired.
\end{proof}

For $i=1,2$, we let 
$$0\lra T_i\lra G_i\stackrel{p_i}\lra A_i\lra 0$$ 
be a short exact sequence where $T_i$ is an algebraic tori and $A_i$ is an abelian variety. We also let $T:=T_1\times T_2$ and let $p:=(p_1,p_2):G\lra A$, where $A:=A_1\times A_2$. 

Using Lemmas~\ref{min dominant subgp exists after finite cover} and \ref{it suffices for finite cover lemma}, after replacing $G_1$ and $G_2$ by finite covers, if necessary, we can assume that $G$ admits 
 a minimal dominant semiabelian subvariety $H_0$.

We let $\overline{\Gamma}:=p_2(\Gamma)\subset A_2(K)$. Then applying \cite[Lemma~5.5]{GS-abelian}, there exists $\overline{x}_1\in A_1(K)$ with the following property: given any algebraic subgroup $\overline{H}\subset A=A_1\oplus A_2$ for which there exists some $\overline{\gamma}\in\overline{\Gamma}$ with $(\overline{x}_1,\overline{\gamma})\in \overline{H}$, we must have that $A_1\subset \overline{H}$.

We let $x_{1,0}\in G_1(K)$ such that $p_1(x_{1,0})=\overline{x}_1$.  We let $f:G\lra G/H_0$; since $p(H_0)=A$, then $G/H_0$ is an algebraic torus by Fact~\ref{torus not abelian 2}. We let $\Gamma '$ be the finitely generated subgroup of $G(K)$ spanned by $x_{1,0}$ and $\Gamma$ and let $\Gamma_0:=f(\Gamma ')$. We also let $U:=G/H_0$ and $U_0\subset U$ be the algebraic subtorus $f(T_1)$. According to Lemma~\ref{generic relative to a f.g. group torus}, there exists $y_0\in U_0(K)$ such that for any algebraic subgroup $V\subset U$, if there exists $\gamma_0\in\Gamma_0$ such that $y_0+\gamma_0\in V(K)$ then we must have that $U_0\subset V$. We let $t_0\in T_1(K)$ such that $f(t_0)=y_0$; we show next that $x_1:=t_0+x_{1,0}$ satisfies the conclusion of Proposition~\ref{generic relative to a f.g. group}.

So, let $H\subset G=G_1\oplus G_2$ be a proper algebraic subgroup containing  $(x_1,\gamma)$ for some $\gamma\in\Gamma$. We argue by contradiction and therefore assume $\pi_2(H)=G_2$. Since $p_1(x_1)=p_1(x_{1,0})=\overline{x}_1$, we obtain that $p(H)$ is an algebraic subgroup of $A$ containing $(\overline{x}_1,p_2(\gamma))$. Notice that $p_2(\gamma)\in\overline{\Gamma}$. If $p(H)$ were a proper subgroup of $A=A_1\oplus A_2$, then the hypothesis satisfied by $\overline{x}_1$ shows that $\overline{\pi}_{2}(p(H))$ is a proper algebraic subgroup of $A_2$, where $\overline{\pi}_{2}:A\lra A_2$ is the projection of $A=A_1\oplus A_2$ onto its second factor. However, $p_2(\pi_2(H))=\overline{\pi}_{2}(p(H))$ which contradicts our assumption that $\pi_2(H)=G_2$; it follows that $p(H)=A$. Using the minimality of $H_0$, we get that $H_0\subset H$.

Next we consider the projection map $f:G\lra G/H_0=U$. We have
$$f(x_1\oplus\gamma)=f(t_0)+f(x_{1,0})+f(\gamma)=y_0+f(x_{1,0})+f(\gamma)\in y_0+\Gamma_0;$$
on the other hand, $x_1\oplus \gamma\in H$ so $y_0+f(x_{1,0})+f(\gamma)=f(x_1\oplus \gamma)$ is contained in the subgroup $V:=f(H)$ of $U$. Our choice of $y_0$ yields that $U_0=f(T_1)\subset V$; taking inverse images under $f$, we have $T_1\subset H+H_0=H$. Since we also know that $p(H)=A=A_1\times A_2$, we see from Lemma \ref{conditions for containing G1} that $G_1\subset H$.

Finally, since $H$ is a proper algebraic subgroup of $G=G_1\times G_2$ containing $G_1$ (as shown above) and also projecting dominantly onto $G_2$ under the natural projection map $\pi_2$ (according to our assumption), we obtain a contradiction. Therefore $\pi_2(H)$ must be a proper algebraic subgroup of $G_2$. This concludes our proof of Proposition~\ref{generic relative to a f.g. group}. 
\end{proof}

\subsection{Constructing topological generators}
\label{Constructing topological generators}

The following is the main result of this subsection.

\begin{prop}
\label{existence of a point 2}
Let $K$ be an algebraically closed field of characteristic $0$.  
Let $\psi:B\lra C$ be a group homomorphism of  semiabelian varieties defined over $K$, and let $y\in C(K)$. If the algebraic subgroup generated by $\psi(B)$ and $y$ is $C$ itself, then there exists $x\in B(K)$ such that the Zariski closure of the cyclic subgroup generated by $\psi(x)+y$ is $C$.
\end{prop}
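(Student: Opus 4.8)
The plan is to reduce the statement to the product situation already settled in Proposition~\ref{generic relative to a f.g. group}. First, since $\psi\colon B\to\psi(B)$ is a surjective homomorphism of semiabelian varieties and $K=\overline K$, every $K$-point of $\psi(B)$ lifts to a $K$-point of $B$; hence it suffices to find $x'\in\psi(B)(K)$ with $\overline{\langle x'+y\rangle}=C$. Put $B':=\psi(B)$, a semiabelian subvariety of $C$. The algebraic subgroup generated by $B'$ and $y$ equals $B'+\overline{\langle y\rangle}$ (a sum of two algebraic subgroups is again one, it contains $B'\cup\{y\}$, and it is contained in any algebraic subgroup containing $B'$ and $y$), so the hypothesis becomes $B'+\overline{\langle y\rangle}=C$.

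Next I would pass to the identity component $C_2:=\overline{\langle y\rangle}^{\circ}$, a semiabelian subvariety of $C$, and set $N:=[\overline{\langle y\rangle}:C_2]$, so $Ny\in C_2$. Because $C$ is connected while $B'+C_2$ is a finite-index connected subgroup of $B'+\overline{\langle y\rangle}=C$, one gets $B'+C_2=C$; and one checks $\overline{\langle Ny\rangle}=C_2$ (since $\langle Ny\rangle=\langle y\rangle\cap C_2$ and $\langle y\rangle$ is the union of the $N$ cosets $jy+\langle Ny\rangle$, whose closures already cover $\overline{\langle y\rangle}\supseteq C_2$). Now consider the surjective homomorphism $\rho\colon B'\times\overline{\langle y\rangle}\to C$, $(b,c)\mapsto b+c$. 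Since a surjective morphism carries dense subsets to dense subsets, $\langle x'+y\rangle=\rho(\langle(x',y)\rangle)$ is dense in $C$ as soon as $\langle(x',y)\rangle$ is dense in $B'\times\overline{\langle y\rangle}$. Thus the problem reduces to finding $x'\in B'(K)$ with $\overline{\langle(x',y)\rangle}=B'\times\overline{\langle y\rangle}$, where now $B'\times\overline{\langle y\rangle}$ is an honest product.

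To produce such an $x'$, I would apply Proposition~\ref{generic relative to a f.g. group} with $G_1=B'$, $G_2=C_2$, and $\Gamma=\langle Ny\rangle\subseteq C_2(K)$ (finitely generated, being cyclic), obtaining $\xi\in B'(K)$ such that for every proper algebraic subgroup $H\subseteq B'\times C_2$ and every $\gamma\in\langle Ny\rangle$, the relation $(\xi,\gamma)\in H$ forces $\pi_2(H)\subsetneq C_2$. As multiplication by $N$ is an isogeny of $B'$, I then choose $x'\in B'(K)$ with $Nx'=\xi$. To verify this works, set $Z:=\overline{\langle(x',y)\rangle}$ and $W:=B'\times\overline{\langle y\rangle}$. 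One has $\pi_2(Z)=\overline{\langle y\rangle}$, and the image of $(x',y)$ generates the component group $W/(B'\times C_2)\cong\overline{\langle y\rangle}/C_2$; hence $Z=W$ exactly when $Z\cap(B'\times C_2)=B'\times C_2$. If instead $Z\subsetneq W$, then $H:=Z\cap(B'\times C_2)$ is a proper algebraic subgroup of $B'\times C_2$ with $\pi_2(H)=\pi_2(Z)\cap C_2=C_2$; but $(\xi,Ny)=N\cdot(x',y)\in H$ and $Ny\in\langle Ny\rangle$, so the defining property of $\xi$ gives $\pi_2(H)\subsetneq C_2$, a contradiction. Therefore $Z=W$, which finishes the argument.

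The only genuinely delicate point is that $\overline{\langle y\rangle}$ need not be connected; this is what forces the detour through $C_2=\overline{\langle y\rangle}^{\circ}$, the replacement of $y$ by its multiple $Ny$ (still a topological generator of $C_2$) when invoking Proposition~\ref{generic relative to a f.g. group}, and the bookkeeping of the component group of $\overline{\langle(x',y)\rangle}$ in the last step. Apart from that the argument is formal: the real work — in particular the existence of a suitable $\xi$ over an arbitrary, possibly countable, algebraically closed field — is entirely contained in Proposition~\ref{generic relative to a f.g. group}.
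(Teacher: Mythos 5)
Your proof is correct, but it takes a genuinely different route from the paper's. The paper splits $B$ and $C$ into toric and abelian parts: it applies the abelian-variety case (\cite[Lemma~5.1]{GS-abelian}) to the induced map $\overline\psi\colon A_1\to A_2$, then invokes Proposition~\ref{existence of a point 1} --- the Galois-theoretic heart of Subsection~\ref{Constructing topological generators}, resting on Hilbertianity of $L(C_{\tor})$ via Claim~\ref{simple groups claim} --- to handle the toric part of $B$, and finally shows by a connectedness/dimension argument (Lemma~\ref{T and y_1 is all}) that $\psi(T_1)+U=C$. You instead replace $B$ by $B'=\psi(B)\subseteq C$, form the external product $B'\times\overline{\langle y\rangle}$ with the addition map onto $C$, and prove the stronger statement that $(x',y)$ topologically generates that product, by feeding the cyclic group $\langle Ny\rangle\subseteq C_2=\overline{\langle y\rangle}^{\circ}$ into Proposition~\ref{generic relative to a f.g. group}. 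Your component-group bookkeeping is in order: $Ny\in C_2$ with $\overline{\langle Ny\rangle}=C_2$, the passage from $(x',y)$ to $N(x',y)=(\xi,Ny)$, and the identity $\pi_2\bigl(Z\cap(B'\times C_2)\bigr)=\pi_2(Z)\cap C_2$ all check out, and there is no circularity since Proposition~\ref{generic relative to a f.g. group} is proved in Subsection~\ref{Minimal dominating semiabelian subvarieties} independently of the present statement. What your route buys is economy: Proposition~\ref{existence of a point 1} and the Hilbertianity argument become unnecessary for this proposition, since all the arithmetic input is already packaged in Proposition~\ref{generic relative to a f.g. group} (which in turn rests on \cite[Lemma~5.5]{GS-abelian} and Lemma~\ref{generic relative to a f.g. group torus}). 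What the paper's route retains is the standalone Proposition~\ref{existence of a point 1}, which produces a topological generator of the subgroup generated by $\psi(T)$ and $y$ even when that subgroup is \emph{proper} in $C$ --- a flexibility your reduction does not reproduce, though it is not needed for the statement at hand.
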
 

We first prove a variant of Proposition~\ref{existence of a point 2}: when $B$ is an algebraic torus, but the algebraic group generated by $\psi(B)$ and $y$ is not necessarily equal to $C$. This result, proven in Proposition~\ref{existence of a point 1}, will then be used to derive Proposition~\ref{existence of a point 2}.

\begin{prop}
\label{existence of a point 1}
Let $K$ be an algebraically closed field of characteristic $0$, let $T$ be an algebraic torus and $C$ be a semiabelian variety.   
Let $\psi:T\lra C$ be a homomorphism of algebraic groups defined over $K$, and let $y\in C(K)$. Then there exists $x\in T(K)$ such that the Zariski closure of the cyclic subgroup generated by $\psi(x)+y$ is the 
algebraic group generated by $\psi(T)$ and $y$.
\end{prop}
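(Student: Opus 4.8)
The plan is to strip off one $\bG_m$-factor of the source torus at a time, reduce to the case where the source is one-dimensional, and then settle that case by an explicit isogeny computation combined with the fact that $\bG_m(K)=K^\ast$ has infinite rank.

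\emph{Reduction to a generation hypothesis.} Put $S:=\psi(T)$, a subtorus of $C$. Since a surjective homomorphism of tori over an algebraically closed field is surjective on $K$-points, $\psi(T(K))=S(K)$, so it is equivalent to find $s\in S(K)$ with $\overline{\langle s+y\rangle}$ equal to the algebraic subgroup $C'=S+\overline{\langle y\rangle}$ generated by $S$ and $y$. Set $C_0:=(C')^0=S+\overline{\langle y\rangle}^0$ (a semiabelian variety containing $S$) and $m:=[C':C_0]$; as $C'$ is generated by $S\subseteq C_0$ and $y$, the class of $y$ generates the cyclic group $C'/C_0$, so $my\in C_0$. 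For every $s$, the group $\overline{\langle s+y\rangle}$ surjects onto $C'/C_0$, hence $\overline{\langle s+y\rangle}=C'$ iff $C_0\subseteq\overline{\langle s+y\rangle}$; and since $\overline{\langle m(s+y)\rangle}\subseteq\overline{\langle s+y\rangle}$ with $m(s+y)=ms+my$ and $ms$ ranging over $S(K)$ as $s$ does, while $C_0$ is also generated by $S$ and $my$ (because $\overline{\langle y\rangle}^0=\overline{\langle Ny\rangle}$ for $N=[\overline{\langle y\rangle}:\overline{\langle y\rangle}^0]$, a multiple of $m$), it suffices to prove: $(\star)$ \emph{for a semiabelian variety $C$, a subtorus $S\subseteq C$, and $w\in C(K)$ with $C$ generated by $S$ and $w$, there is $s\in S(K)$ with $\overline{\langle s+w\rangle}=C$.}

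\emph{Induction on $\dim S$.} If $\dim S=0$ then $\overline{\langle w\rangle}=C$ and $s=0$ works. If $\dim S\geq 1$, write $S=S_0\times S_1$ with $S_1\cong\bG_m$ and $\dim S_0=\dim S-1$ (possible since $K=\overline{K}$). Passing to $\bar C:=C/S_1$, the images of $S$ and $w$ generate $\bar C$, so by induction there is $s_0\in S_0(K)$ whose image $\bar s_0$ satisfies $\overline{\langle\bar s_0+\bar w\rangle}=\bar C$; then $z:=s_0+w$ has $\overline{\langle z\rangle}$ surjecting onto $C/S_1$, whence $C$ is generated by $S_1$ and $z$. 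Granting $(\star)$ for the one-dimensional torus $S_1$, we get $s_1\in S_1(K)$ with $\overline{\langle z+s_1\rangle}=C$, and $s:=s_0+s_1\in S$ works since $s+w=z+s_1$. So everything comes down to the case $\dim S=1$.

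\emph{The case $\dim S=1$.} Write $S=S_1\cong\bG_m$; we may assume $\overline{\langle z\rangle}\neq C$. Since $C$, being connected, is covered by finitely many cosets of the connected subgroup $D+S_1$, where $D:=\overline{\langle z\rangle}^0$, one gets $C=D+S_1$ with $D$ a codimension-$1$ semiabelian subvariety and $D\cap S_1$ finite; thus $\pi\colon D\times\bG_m\to C$, $(d,t)\mapsto d+\iota(t)$, is an isogeny, where $\iota\colon\bG_m\isomto S_1$. Writing $z=d_0+\iota(t_0)$ with $d_0\in D(K)$, the hypothesis forces the image of $d_0$ to generate $C/S_1$, hence (as $D\to C/S_1$ is an isogeny) $\overline{\langle d_0\rangle}=D$. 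As $z+\iota(u)=\pi(d_0,t_0u)$, it is enough to find $v\in\bG_m(K)$ with $\overline{\langle(d_0,v)\rangle}=D\times\bG_m$: then $\pi^{-1}\bigl(\overline{\langle\pi(d_0,v)\rangle}\bigr)$ contains $(d_0,v)$ hence all of $D\times\bG_m$, forcing $\overline{\langle\pi(d_0,v)\rangle}=C$. Because $\overline{\langle d_0\rangle}=D$, the subgroup $\overline{\langle(d_0,v)\rangle}$ always surjects onto $D$; analyzing the connected subgroups of $D\times\bG_m$ that surject onto $D$ shows that $\overline{\langle(d_0,v)\rangle}\neq D\times\bG_m$ forces $v^n\in\mu_\infty\cdot\{\phi(d_0):\phi\in\operatorname{Hom}(D,\bG_m)\}$ for some $n\geq 1$. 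Since $\operatorname{Hom}(D,\bG_m)$ embeds into the finitely generated group of characters of the toric part of $D$, such $v$ lie in a subgroup of $\bG_m(K)$ of finite rank; as $K^\ast$ has infinite rank, a suitable $v$ exists, completing $(\star)$ and hence the proof.

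\emph{Main obstacle.} The heart of the argument — and the only place the hypothesis on $K$ is genuinely used — is this last step: one must correctly identify the structure of $\overline{\langle z\rangle}$ (its identity component being a codimension-$1$ isogeny-complement of $S_1$ carrying a topological generator $d_0$), transport the problem across $\pi$ to the \emph{split} group $D\times\bG_m$, and then run the "multiplicative independence modulo a finitely generated subgroup" argument — the analogue for $D\times\bG_m$ of Lemma~\ref{generic relative to a f.g. group torus} — to produce $v$. The bookkeeping needed to reach $(\star)$ when the algebraic group generated by $\psi(T)$ and $y$ is disconnected (handled above by passing from $y$ to $my$) is an additional, if routine, wrinkle.
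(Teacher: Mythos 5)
Your argument is correct, but it proves the proposition by a genuinely different mechanism than the paper. The paper also peels off one-dimensional tori, but it does so by fixing a splitting $T=T_1\oplus\cdots\oplus T_m$ and choosing each image point $z_i\in\psi(T_i)$ subject to a \emph{field-theoretic} genericity condition: no multiple $nz_i$ may be rational over $K_0(C_{\tor},z_1,\dots,z_{i-1})$ (condition \eqref{necessary technical condition}). The verification that such points work (Claim~\ref{one technical claim}) hinges on the observation that if $\psi(T_i)\cap D$ were finite, then $nz_i$ would be the unique point of a $0$-dimensional variety and hence rational over that field; producing the $z_i$ requires Hilbertianity of $L(C_{\tor})$ via Thornhill's theorem (Claim~\ref{simple groups claim}). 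You instead induct on $\dim\psi(T)$ by quotienting $C$ by one $\bG_m$ at a time, and in the one-dimensional base case you transport the problem across the isogeny $D\times\bG_m\to C$ (with $D=\overline{\langle z\rangle}^0$ a topologically generated complement) and classify the proper closed subgroups of $D\times\bG_m$ surjecting onto $D$ as pullbacks of graphs of characters $D\to\bG_m$; a good point then exists because the bad locus sits in the division hull of a finitely generated subgroup of $K^\ast$, which has infinite rank. This is exactly the mechanism of the paper's Lemma~\ref{generic relative to a f.g. group torus} (multiplicative independence) rather than its Galois-theoretic argument, so your route is arguably more elementary: it needs no Hilbertian-field input, only the injectivity of $\operatorname{Hom}(D,\bG_m)\to X^\ast(T_D)$ (Fact~\ref{torus not abelian}) and the infinite rank of $K^\ast$ modulo torsion. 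The trade-off is the extra structural bookkeeping you carry out explicitly (the reduction from the possibly disconnected group generated by $\psi(T)$ and $y$ to the connected statement $(\star)$ via passage to $my$, and the identification of $D$ as an isogeny complement of $S_1$), whereas the paper's field-theoretic condition handles all algebraic subgroups $D$ at once without any classification of subgroups.
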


\begin{proof}
Our argument follows the proof of \cite[Lemma~5.1]{GS-abelian}.

Let $K_0$ be a finitely generated subfield of $K$ such that $T$, $C$, and  $\psi$ are defined over $K_0$, and moreover, $y\in C(K_0)$. So, without loss of generality, we may assume $K$ is the algebraic closure of $K_0$.  

We let $T=T_1\oplus\cdots \oplus T_m$ be written as a direct sum of $1$-dimensional algebraic tori; at the expense of replacing $K_0$ by a finite extension, we may assume each $T_i$ is defined over $K_0$. Then $$\psi(T)=\sum_{i=1}^m \psi(T_i)$$ 
and moreover, each $\psi(T_i)$ is either trivial or a $1$-dimensional algebraic torus. Our strategy is to find an algebraic point $z_i\in \psi(T_i)$  
such that if $z:=\sum_{i=1}^m z_i$, then the Zariski closure of the cyclic group generated by $z+y$ is the algebraic group 
generated by $\psi(T)$ and $y$.  If for some $i$ we have that $\psi(T_i)=\{0\}$ is trivial, then we simply pick $z_{i}=0$. Now consider those $i\in\{1,\dots, m\}$ such that $\psi(T_i)$ is nontrivial. For each such $i$, we will show there exist $z_{i}\in \psi(T_i)$ such that for any positive integer $n$ we have
\begin{equation}
\label{necessary technical condition}
nz_{i}\notin \left(\psi(T_i)\right)\left(K_0\left(C_{\tor},  z_{1},\dots, 
z_{i-1}\right)\right).
\end{equation}

\begin{claim}
\label{one technical claim}
If the above condition \eqref{necessary technical condition} holds for each $i=1,\dots, m$ such that $\psi(T_i)\ne \{0\}$, then the Zariski closure of the cyclic group generated by 
$z+y$ is the algebraic subgroup generated by $\psi(T)$ and $y$.
\end{claim}

\begin{proof}[Proof of Claim~\ref{one technical claim}.]
First, we note that if \eqref{necessary technical condition} holds, then $z_i\ne 0$; therefore, $z_i=0$ automatically implies that $\psi(T_i)=\{0\}$.

Now, assume 
there exists some algebraic subgroup $D\subset C$ (not necessarily connected) such that 
$z+y\in D(K)$. Let $i\le m$ be the largest integer such that $z_{i}\ne 0$; then we have
$$z_{i}\in \left((-y-z_{1}-\cdots -z_{i-1})+D\right)\cap \psi(T_i).$$
Assume first that $\psi(T_i)\cap D$ is a proper algebraic subgroup of $\psi(T_i)$. Since 
$\psi(T_i)$ is a $1$-dimensional torus, we see $D\cap \psi(T_i)$ is a $0$-dimensional 
algebraic subgroup of $C$; hence there exists a nonzero integer $n$ such that 
$n\cdot (D\cap \psi(T_i))=\{0\}$. Then $nz_{i}$ is the only (geometric) point of the 
subvariety $n\cdot \left(\left((-y-z_{1}-\cdots -z_{i-1})+D\right)\cap \psi(T_i)\right)$ 
which is thus rational over $K_0(C_{\tor},z_{1},\dots, z_{i-1})$. But by our 
construction, $$nz_{i}\notin \psi(T_i)(K_0(C_{\tor}, z_{1},\dots, z_{i-1}))$$ 
which is a contradiction. Therefore $\psi(T_i)\subset D$ if $i$ is the largest index in $\{1,2,\dots,m\}$ such that $z_{i}\ne 0$, or equivalently, if $i$ is the largest index for which $\psi(T_i)\ne 0$. 

Now note that $z+y=\sum_{j\leq i}z_j+y$ and $z_i\in \psi(T_i)\subset D$ so $z'+y=z+y-z_i\in D$, where $z':=z_{1}+\cdots +z_{i-1}$. Repeating the exact same argument as above for the next positive integer $i_1<i$ for which $\psi(T_{i_1})\ne \{0\}$, and then arguing inductively we obtain that each $\psi(T_j)$ is contained in $D$, and therefore $\psi(T)\subset D$. But then 
$z\in \psi(T)\subset D$ and so, $y\in D$ as well, which yields that  the Zariski 
closure of the cyclic group generated by $z+y$ is the algebraic subgroup of $C$ generated by $\psi(T)$ 
and $y$, as desired.  
\end{proof}

We just have to show that we can choose $z_{i}$ satisfying \eqref{necessary technical 
condition}. 
So, the problem reduces to the following: $L$ is a finitely generated field of characteristic 
$0$, $\varphi$ is an algebraic group homomorphism between an algebraic torus $U$ and some semiabelian variety $C$ all defined over $L$, $\varphi$ has finite kernel, and we want to find $x\in U(\overline{L})$ such that for each positive integer $n$, we have
\begin{equation}
\label{necessary technical condition 2}
n\varphi(x)\notin \varphi(U)\left(L\left(C_{\tor}\right)\right).
\end{equation}
Indeed, with the above notation, $U:=T_i$ (for each $i=1,\dots, m$), $L$ is the extension of $K_0$ generated  by $z_{j}$ 
(for $j=1,\dots, i-1$), and $\varphi$ is the homomorphism 
$\psi$ restricted to $U=T_i$ for which $\psi(T_i)$ is nontrivial. 

Let $d$ be the degree of the isogeny $\varphi ':U\lra \varphi(U)\subset C$. In particular, this means that for each $z\in C(\overline{L})$ and each $x\in U(\overline{L})$ for which $\varphi(x)=z$ we have
\begin{equation}
\label{key index inequality}
\left[L(x):L\right]\le d\cdot \left[L(z):L\right].  
\end{equation}
For any subfield $M\subset \overline{L}$, we let 
$M^{(d)}$ be the compositum of all extensions of $M$ of degree at most equal to $d$. 

\begin{claim}
\label{simple groups claim}
Let $L$ be a finitely generated field of characteristic $0$, let $C$ be a  semiabelian variety defined over $L$, let $L_{\tor}:=L(C_{\tor})$, and let $d$ be a positive integer. Then there exists a normal extension of $L_{\tor}^{(d)}$ whose Galois group is not abelian.
\end{claim}

\begin{proof}[Proof of Claim~\ref{simple groups claim}.]
The proof is identical to the one from \cite[Claim~5.3]{GS-abelian}. Note that $L(C_{\tor})$ is Hilbertian since we can still apply \cite[Theorem,~p.~238]{Thornhill} due to Fact~\ref{torsion extensions}.
\end{proof}

Claim~\ref{simple groups claim} yields that there exists a point $x\in U(\overline{L})$ which is not defined over an abelian extension of $L\left(C_{\tor}\right)^{(d)}$; i.e., $nx\notin U\left(L\left(C_{\tor}\right)^{(d)}\right)$ for all positive integers $n$. Hence, $n\varphi(x)\notin \varphi(U)\left(L\left(C_{\tor}\right)\right)$ (see \eqref{key index inequality}), which  concludes the proof of Proposition~\ref{existence of a point 1}.
\end{proof}

\begin{proof}[Proof of Proposition~\ref{existence of a point 2}.]
Let 
$$0\lra T_1\lra B\stackrel{p_1}\lra A_1\lra 0$$
$$0\lra T_2\lra C\stackrel{p_2}\lra A_2\lra 0$$ 
be two short exact sequences of algebraic groups with $T_i$ tori and $A_i$ abelian varieties. We let $\overline{y}:=p_2(y)$. By Fact~\ref{morphism fact}, the endomorphism $\psi:B\lra C$ induces an endomorphism of abelian varieties $\overline{\psi}:A_1\lra A_2$. Using \cite[Lemma~5.1]{GS-abelian}, we conclude that there exists $x_0\in A_1(K)$ such that the Zariski closure of the cyclic group generated by $\overline{\psi}(x_0)+\overline{y}$ equals the algebraic subgroup generated by $\overline{\psi}(A_1)$ and $\overline{y}$. Since the algebraic subgroup generated by $\psi(B)$ and $y$ equals $C$, we conclude that the algebraic subgroup generated by $\overline{\psi}(A_1)$ and $\overline{y}$ equals $A_2$. So, the cyclic subgroup generated by $\overline{\psi}(x_0)+\overline{y}$ is Zariski dense in $A_2$.

Choose a point $x_1\in B(K)$ such that $p_1(x_1)=x_0$ and let $y_1:=\psi(x_1)+y\in C(K)$. Using Proposition~\ref{existence of a point 1}, we can find $t\in T_1(K)$ such that the Zariski closure $H$ of the cyclic group generated by $\psi(t)+y_1$ is equal to the algebraic group generated by $\psi(T_1)$ and $y_1$. We claim that the point $x:=x_1+t$ satisfies the conclusion of Proposition~\ref{existence of a point 2}. Since $\psi(x)+y=\psi(t)+\psi(x_1)+y=\psi(t)+y_1$, it therefore suffices to prove the following:

\begin{lemma}
\label{T and y_1 is all}
With the above notation, $H=C$.
\end{lemma}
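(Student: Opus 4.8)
The plan is to pin down the quotient $C/H$ and show that it is forced to carry two incompatible structures at once — that of an algebraic torus and that of a quotient of an abelian variety — whence $C/H$ is trivial and $H=C$. The only properties of $H$ I will use are the ones handed to us by Proposition~\ref{existence of a point 1}: namely, $H$ is the algebraic subgroup of $C$ generated by $\psi(T_1)$ and $y_1=\psi(x_1)+y$, so that $\psi(T_1)\subseteq H$ and $y_1\in H$.

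First I would show that $C/H$ is a torus. The subgroup $p_2(H)\subseteq A_2$ is closed and contains $p_2(y_1)=p_2(\psi(x_1))+p_2(y)=\overline{\psi}(p_1(x_1))+\overline{y}=\overline{\psi}(x_0)+\overline{y}$, using the commutativity $p_2\circ\psi=\overline{\psi}\circ p_1$ of the square inducing the map on associated abelian varieties together with $p_1(x_1)=x_0$. Since $x_0$ was chosen so that the cyclic subgroup generated by $\overline{\psi}(x_0)+\overline{y}$ is Zariski dense in $A_2$, it follows that $p_2(H)=A_2$. Then Fact~\ref{torus not abelian 2}, applied to $0\lra T_2\lra C\stackrel{p_2}{\lra}A_2\lra 0$, tells us that $C/H$ is an algebraic torus.

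Next I would show that $\psi(B)\subseteq H$. Consider the composite homomorphism $\psi(B)\hookrightarrow C\twoheadrightarrow C/H$. Its kernel contains $\psi(T_1)$, so it factors through $\psi(B)/\psi(T_1)$. On the other hand, the surjection $B\twoheadrightarrow\psi(B)\twoheadrightarrow\psi(B)/\psi(T_1)$ annihilates $T_1$ and hence factors through $B/T_1\cong A_1$, exhibiting $\psi(B)/\psi(T_1)$ as a quotient of $A_1$, i.e.\ as an abelian variety. We therefore obtain a group homomorphism from an abelian variety to the torus $C/H$, which by Fact~\ref{torus not abelian} must be trivial; consequently $\psi(B)$ maps to $0$ in $C/H$, that is, $\psi(B)\subseteq H$. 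Since then $y=y_1-\psi(x_1)\in H$ as well (because $y_1\in H$ and $\psi(x_1)\in\psi(B)\subseteq H$), the subgroup $H$ contains both $\psi(B)$ and $y$; as these generate $C$ by hypothesis, we conclude $H=C$.

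I do not anticipate a genuine obstacle here: the argument is essentially bookkeeping with the anti-equivalence of Fact~\ref{antiequiv of cats} and the elementary Facts~\ref{torus not abelian}, \ref{morphism fact}, and \ref{torus not abelian 2}. The two places that call for care are keeping the induced maps $\overline{\psi}$, $p_1$, $p_2$ straight in the computation of $p_2(H)$, and checking that $\psi(B)/\psi(T_1)$ really is an abelian variety (so that the torus $C/H$ receives a nontrivial-looking homomorphism from one, which Fact~\ref{torus not abelian} then kills).
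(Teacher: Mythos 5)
Your proof is correct. It rests on the same three pillars as the paper's argument: the Zariski density in $A_2$ of the cyclic group generated by $\overline{\psi}(x_0)+\overline{y}$ (which forces surjectivity onto the abelian quotient), Fact~\ref{torus not abelian 2} (to recognize a quotient of $C$ as a torus), and the rigidity of Fact~\ref{torus not abelian}. The difference is organizational. The paper introduces $U$, the Zariski closure of the cyclic group generated by $y_1$, proves $C/U$ is a torus, and then shows by a toric-part computation (the chain \eqref{a torus 1}--\eqref{a torus 4} plus a dimension count) that $\psi(T_1)+U$ already equals $\psi(B)+U=C$. You instead quotient by $H$ itself: since $y_1\in H$ gives $p_2(H)=A_2$, the quotient $C/H$ is a torus; since $\psi(B)\to C/H$ kills $\psi(T_1)$, it factors through the abelian variety $\psi(B)/\psi(T_1)$ and hence vanishes, so $\psi(B)\subseteq H$, and then $y=y_1-\psi(x_1)\in H$, so $H$ contains the generators of $C$. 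Your packaging avoids introducing $U$ and the isomorphism bookkeeping at the end; both routes are equally rigorous, and yours is arguably the more transparent rendering of the same underlying idea.
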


\begin{proof}[Proof of Lemma~\ref{T and y_1 is all}.]
We let $U$ be the algebraic subgroup which is the Zariski closure of the cyclic group generated by $y_1$. By our choice of $x_0$, $x_1$ and $t$, we know that
\begin{itemize}
\item[(i)] $\psi(T_1)\subset H$;
\item[(ii)] $U\subset H$; and
\item[(iii)] $p_2(U)=A_2$.
\end{itemize}

Statements (i) and (ii) follow directly from the definitions. Statement (iii) holds because $p_2(y_1)=p_2(\psi(x_1))+p_2(y)=\overline\psi (p_1(x_1))+p_2(y)=\overline\psi(x_0)+\overline{y}$ and by the fact that the Zariski closure of the cyclic group generated by $\overline\psi(x_0)+\overline{y}$ equals $A_2$. Our hypothesis that the algebraic subgroup generated by $\psi(B)$ and $y$ is $C$ itself yields that $\psi(B)+U=C$. Our goal is to show that $\psi(T_1)+U=C$.

Using property~(iii) above and Fact~\ref{torus not abelian 2}, we see $C/U$ is an algebraic torus. Since $\psi(B)/(\psi(B)\cap U)\simeq(\psi(B)+U)/U=C/U$, we see
\begin{equation}
\label{a torus 1}
\psi(B)/(\psi(B)\cap U)\text{ is an algebraic torus.}
\end{equation}
Since $\psi(T_1)$ is the toric part of $\psi(B)$, we obtain that 
\begin{equation}
\label{a torus 2}
\psi(T_1)/(\psi(T_1)\cap U)\text{ is the toric part of }\psi(B)/(\psi(B)\cap U).
\end{equation}

Equations \eqref{a torus 1} and \eqref{a torus 2} yield that 
\begin{equation}
\label{a torus 3}
\psi(B)/(\psi(B)\cap U) \isomto \psi(T_1)/(\psi(T_1)\cap U)\text{ and therefore,}
\end{equation}
\begin{equation}
\label{a torus 4}
(\psi(B)+U)/U\isomto (\psi(T_1)+U)/U.
\end{equation}
Equation \eqref{a torus 4} yields that $\dim(\psi(B)+U)=\dim(\psi(T_1)+U)$ and because $C=\psi(B)+U$ is connected, we conclude that $H=\psi(T_1)+U=C$, as desired. 
\end{proof}
This concludes our proof of Proposition~\ref{existence of a point 2}.
\end{proof}

\subsection{Conditions to guarantee the existence of a Zariski dense orbit}
\label{Conditions to guarantee a point has Zariski dense orbit}

\begin{lemma}
\label{combinatorial lemma}
Let $K$ be an algebraically closed field of characteristic $0$, let $G$ be a semiabelian variety defined over $K$, let   $y_1,\dots, y_r\in G(K)$, and let $P_1,\dots, P_r\in \Q[z]$ such that $P_i(n)\in\Z$ for 
each $n\ge 1$ and for each $i=1,\dots, r$, while $\deg(P_r)>\cdots >\deg(P_1)>0$. For an  
infinite subset $S\subset \N$, let $V:=V(S; P_1,\dots , P_r; y_1, \dots, y_r)$ be the Zariski closure of the set 
$$\left\{P_1(n)y_1+ \cdots + P_r(n)y_r\colon n\in S\right\}.$$ 
Then 
there exist nonzero integers $\ell_1,\dots, \ell_r$ such that  $V$ 
contains  a coset of the subgroup $\Gamma$ generated by $\ell_1y_1,\dots, \ell_ry_r$.  
\end{lemma}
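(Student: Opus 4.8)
The plan is to study the behaviour of the iterated point $Q_n := P_1(n)y_1 + \cdots + P_r(n)y_r$ as $n$ ranges over the infinite set $S$, and to extract from it a Zariski-dense piece living in a coset of a finite-index subgroup of $\langle y_1,\dots,y_r\rangle$. First I would pass to a finitely generated subgroup: all the points $Q_n$, $n\in S$, lie in the finitely generated subgroup $\Gamma_0 := \langle y_1,\dots,y_r\rangle \subset G(K)$, so by Fact~\ref{Faltings theorem} every irreducible component of $V$ is a coset of a semiabelian subvariety of $G$. Let $W$ be an irreducible component of $V$ that contains $Q_n$ for infinitely many $n\in S$; write $W = g + B$ for a semiabelian subvariety $B \subset G$ and a point $g\in G(K)$. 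The goal is then to show $W$ (equivalently $V$) contains a coset of the subgroup generated by $\ell_1 y_1,\dots,\ell_r y_r$ for suitable nonzero integers $\ell_i$.

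The key combinatorial input is that the vector-valued function $n\mapsto (P_1(n),\dots,P_r(n))$ has distinct positive degrees $\deg P_1 < \cdots < \deg P_r$, which forces the $Q_n$ to ``spread out'' in all $r$ directions. Concretely, I would look at the quotient map $\pi\colon G \to G/B$; then $\pi(Q_n) = \pi(g)$ for infinitely many $n\in S$, i.e., $\sum_i P_i(n)\pi(y_i)$ is constant along an infinite subset $S'\subset S$. A polynomial identity argument (taking successive differences along an arithmetic-progression-like subsequence, or using that a polynomial vanishing on an infinite set is identically zero after we bound degrees) should show that this can happen only if each $\pi(y_i)$ is torsion in $G/B$ — here is where the strictly increasing degrees matter, since they prevent cancellation of leading terms among the $P_i$. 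Hence there are nonzero integers $\ell_i$ with $\ell_i\pi(y_i)=0$, i.e., $\ell_i y_i \in B$ for all $i$; thus the subgroup $\Gamma := \langle \ell_1 y_1,\dots,\ell_r y_r\rangle$ lies in $B$. Since $W = g+B$ contains $Q_{n_0}$ for some $n_0$, and $\Gamma \subset B$, the full coset $Q_{n_0} + \Gamma \subset g + B = W \subset V$, which is exactly the desired conclusion.

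I expect the main obstacle to be making the ``distinct degrees force torsion'' step fully rigorous in the non-split / mixed setting, since $G/B$ is again a semiabelian variety (not just a torus or an abelian variety) and one must rule out nontrivial polynomial relations $\sum_i P_i(n)\pi(y_i) = c$ holding for infinitely many $n\in S$ without the convenience of a group structure that splits as torus $\times$ abelian variety. The cleanest route is: choose a faithful family of $1$-dimensional characters/cocharacters and reduce to the two building-block cases. In the torus case, writing multiplicatively, the relation becomes $\prod_i u_i^{P_i(n)} = c$ for $u_i\in \bG_m(K)$, and picking an embedding $K\hookrightarrow \bC$ and taking logarithms (or using valuations) shows the polynomial $\sum_i P_i(n)\log u_i - \log c$ in $n$ has infinitely many integer roots, hence is identically zero; since the $P_i$ are $\Q$-linearly independent (distinct degrees!), each $\log u_i = 0$, so $u_i$ is a root of unity. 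In the abelian-variety case one uses instead the linear independence over $\Q$ of the $P_i$ together with the fact that a finitely generated subgroup of $A(K)$ modulo torsion is a free $\Z$-module, so $\sum_i P_i(n) \bar y_i$ being eventually constant on an infinite set forces each $\bar y_i$ torsion after clearing denominators. Once each $\pi(y_i)$ is torsion, clearing denominators to get the integers $\ell_i$ and assembling the coset is routine.

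One technical point to handle carefully: $V$ may have several irreducible components, and I should make sure the component $W$ I select genuinely contains infinitely many of the $Q_n$, which is automatic by pigeonhole since $S$ is infinite and there are finitely many components. Also, since the $P_i$ need only take integer values on positive integers (they may have rational coefficients), I would work throughout with the $\Q$-vector space spanned by $1, P_1,\dots,P_r$ inside $\Q[z]$ and use that $\{P_1,\dots,P_r\}$ together with the constants is linearly independent over $\Q$ precisely because the degrees are distinct and positive — this is the hypothesis $\deg P_r > \cdots > \deg P_1 > 0$ being used in full.
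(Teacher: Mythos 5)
Your proposal is correct, and while it shares the paper's skeleton (apply Fact~\ref{Faltings theorem} to reduce to a coset $g+B$ of a semiabelian subvariety containing $P_1(n)y_1+\cdots+P_r(n)y_r$ for infinitely many $n$, then show $\ell_i y_i\in B$ for suitable nonzero $\ell_i$), the combinatorial core is handled differently. The paper never leaves $G$: it proves $\ell_i y_i\in B$ by induction on $r$, taking differences of points in the coset and cross-multiplying by values $P_1'(n_1)$, $P_1'(n)$ to eliminate $y_1$ while preserving the degrees of the remaining polynomials. You instead pass to the quotient $\pi\colon G\to G/B$, observe that $\sum_i P_i(n)\pi(y_i)$ is constant on an infinite set, and deduce that each $\pi(y_i)$ is torsion by writing the finitely generated group $\langle\pi(y_1),\dots,\pi(y_r)\rangle$ modulo torsion as a free $\Z$-module of finite rank and using that $1,P_1,\dots,P_r$ are $\Q$-linearly independent (a polynomial with infinitely many integer roots vanishes identically, and the strictly increasing positive degrees then kill each coordinate in turn). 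Both arguments are valid; yours is arguably more transparent, the paper's is more elementary in that it avoids quotient groups and the structure theorem. One correction to your own assessment: the ``main obstacle'' you anticipate in the mixed semiabelian setting is not there. The free-$\Z$-module argument you give for the abelian-variety case uses nothing about abelian varieties --- only that a finitely generated subgroup of any abelian group is a finitely generated abelian group --- so it applies verbatim to $(G/B)(K)$ and no reduction to characters or building blocks is needed. This is fortunate, because the torus sub-argument via an embedding into $\C$ and logarithms is not rigorous as sketched ($\log$ is multivalued, and taking absolute values only yields $|u_i|=1$, which does not imply $u_i$ is a root of unity); drop that detour and apply the finitely-generated-group argument directly.
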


\begin{proof}
The proof is almost identical with the proof of \cite[Lemma~5.6]{GS-abelian}; however, since that proof employed (though, in a non-essential way) Poincar\'e's Reducibility Theorem for abelian varieties, we include a proof for our present lemma in the context of semiabelian varieties which, of ocurse, does not use the Poincar\'e's Reducibility Theorem.

Let $\Gamma_0$ be the subgroup of $G$ generated  by $y_1,\dots, y_r$. Since $V(K)\cap\Gamma_0$ is Zariski dense in $V$, then by Fact~\ref{Faltings theorem} we see that $V$ is a finite union of cosets of algebraic subgroups of $G$.  So, at the expense of replacing $S$ by an infinite subset, we may assume $V=z+C$, for some $z\in G(K)$ and some irreducible algebraic subgroup $C$ of $G$. Hence $\{-z+P_1(n)y_1+\cdots + P_r(n)y_r\}_{n\in S}\subset C(K)$. We will show there exist nonzero integers $\ell_i$ such that $\ell_iy_i\in C(K)$ for each $i=1,\dots, r$.

We proceed by induction on $r$. We first handle the base case when $r=1$. Then $\{P_1(n)\}_{n\in S}$ takes infinitely many distinct integer values as $\deg(P_1)\ge 1$, and in particular there exist $n_0,n\in S$ with $\ell:=P_1(n)-P_1(n_0)$ non-zero. Since $C(K)$ is a subgroup of $G(K)$, we see $\ell y_1=(-z+P_1(n)y_1)-(-z+P_1(n_0)y_1)\in C(K)$.

Next let $s\ge 2$. Assume the statement holds for all $r<s$, we prove it for $r=s$. Let $n_0\in S$. Letting $P'_i:=P_i-P_i(n_0)$, we see $\{P'_1(n)y_1+\cdots + P'_s(n)y_s\}_{n\in S}\subset C(K)$. Since $\deg(P'_1)\ge 1$ there exists $n_1\in S$ such that $P'_1(n_1)\ne 0$. For each $i=2,\dots, s$ we let
$$Q_i(z):=P'_1(n_1) P'_i(n)-P'_1(n) P'_i(n_1).$$
Since $C(K)$ is a subgroup of $G(K)$ and $\sum_{i=2}^s P'_i(n)y_i\in C(K)$ it follows that $\sum_{i=2}^s P'_i(n)P'_1(n_1)y_i\in C(K)$. Similarly, $\sum_{i=2}^s P'_1(n)P'_i(n_1)y_i\in C(K)$. Subtracting, we have
$$\left\{\sum_{i=2}^s Q_i(n)y_i\right\}_{n\in S}\subset C(K).$$ 
Since $\deg(Q_i)=\deg(P_i)$ for each $i=2,\dots, s$, we can use the induction hypothesis and conclude that there exist nonzero integers $\ell_2,\dots, \ell_s$ such that $\ell_iy_i\in C(K)$ for each $i\ge 2$. Let $\ell_1:=P'_1(n_1)\cdot\prod_{i=2}^s\ell_i$ which is non-zero since $P'_1(n_1)$ is. Since $P'_1(n_1)y_1+\cdots +P'_s(n_1)y_s\in C(K)$, we see $\ell_1y_1=\left(P'_1(n_1)\cdot\prod_{i=2}^s\ell_i \right)y_1\in C(K)$. This concludes our proof.
\end{proof}

Lemma~\ref{combinatorial lemma} has the following important consequence for us.
\begin{lemma}
\label{infinitely many iterates nilpotent operator}
Let $K$ be an algebraically closed field of characteristic $0$, let $G$ be a semiabelian variety defined over $K$, let $\tau \in \End(G)$ with the property that there exists a positive integer $r$ such that $(\tau-\id)^r=0$, let $y\in G(K)$, let $\varphi:G\lra G$ be a self-map such that $\varphi(x)=\tau(x)+y$ for each $x\in G$. 

Let $x\in G(K)$ and let $c+C$ be a coset of an algebraic subgroup $C\subset G$ with the property that there exists an infinite set $S$ of positive integers such that  $\{\varphi^n(x)\colon n\in S\}\subset c+C$. Then there exists a positive integer $\ell$ such that $\ell\cdot\left(\beta(x)+ y\right)\in C(K)$, where $\beta:=\tau-\id$. 

Moreover, if  the cyclic group generated by $\beta(x)+y$ is Zariski dense in $G$, then $C=G$ and therefore, the set $\{\varphi^n(x)\colon n\in S\}$ is Zariski dense in $G$.
\end{lemma}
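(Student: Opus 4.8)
The plan is to make the dependence of $\varphi^n(x)$ on $n$ completely explicit, so that $\varphi^n(x)-x$ becomes a polynomial combination of fixed elements of $G$ to which Lemma~\ref{combinatorial lemma} applies directly. Since $\varphi(x)=\tau(x)+y$, an immediate induction gives $\varphi^n(x)=\tau^n(x)+\sum_{j=0}^{n-1}\tau^j(y)$. Writing $\tau=\id+\beta$ with $\beta^r=0$ and expanding by the binomial theorem (using that $\id$ and $\beta$ commute) yields $\tau^n=\sum_{k=0}^{r-1}\binom nk\beta^k$, and the hockey-stick identity $\sum_{j=0}^{n-1}\binom jk=\binom n{k+1}$ gives $\sum_{j=0}^{n-1}\tau^j=\sum_{k=0}^{r-1}\binom n{k+1}\beta^k$. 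Collecting terms and using $\beta^r=0$ once more, these collapse into the single identity
\[
\varphi^n(x)=x+\sum_{k=1}^{r}\binom nk\,\beta^{k-1}\bigl(\beta(x)+y\bigr).
\]
So, setting $u:=\beta(x)+y$, $y_k:=\beta^{k-1}(u)$ and $P_k(z):=\binom zk\in\Q[z]$ for $k=1,\dots,r$, we obtain $\varphi^n(x)-x=\sum_{k=1}^r P_k(n)y_k$, where each $P_k$ is integer-valued on positive integers and $\deg(P_r)>\cdots>\deg(P_1)=1>0$.

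Next I would apply Lemma~\ref{combinatorial lemma} to $V:=V(S;P_1,\dots,P_r;y_1,\dots,y_r)$, the Zariski closure of $\{\varphi^n(x)-x:n\in S\}$, obtaining nonzero integers $\ell_1,\dots,\ell_r$ such that $V$ contains a coset $v+\Gamma$ of the subgroup $\Gamma$ generated by $\ell_1y_1,\dots,\ell_ry_r$. On the other hand, the hypothesis $\{\varphi^n(x):n\in S\}\subset c+C$ gives $\{\varphi^n(x)-x:n\in S\}\subset(c-x)+C$, and since $(c-x)+C$ is Zariski closed we get $V\subset(c-x)+C$, hence $v+\Gamma\subset(c-x)+C$; subtracting $v$ shows $\Gamma\subset C(K)$. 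In particular $\ell_1u=\ell_1y_1\in C(K)$, so the first assertion holds with $\ell:=|\ell_1|$.

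For the ``moreover'' part I would argue as follows. Assuming $\langle u\rangle$ is Zariski dense in $G$, the relation $\ell u\in C(K)$ says the image of $u$ in $G/C$ is a torsion point, so $\langle u\rangle$ lies in the preimage in $G$ of a finite subgroup of $G/C$, i.e.\ in a finite union of cosets of the identity component $C^0$; taking Zariski closures, $G$ lies in a finite union of translates of the irreducible variety $C^0$, and irreducibility of $G$ forces $G\subset g+C^0$ for a single $g$. Comparing dimensions then gives $G=g+C^0$, hence $C^0=G$ (as $0\in G$) and $C=G$. Finally, to deduce that $\{\varphi^n(x):n\in S\}$ is itself Zariski dense, let $V'$ be its Zariski closure; by the ``moreover'' part of Fact~\ref{fact minimal poly} this set lies in a finitely generated subgroup, so Fact~\ref{Faltings theorem} expresses $V'$ as a finite union of cosets of semiabelian subvarieties, one of which (by pigeonhole) contains $\{\varphi^n(x):n\in S'\}$ for an infinite $S'\subset S$; applying the first assertion to that coset and re-running the argument just given shows the coset equals $G$, whence $V'=G$.

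The computation producing the closed formula for $\varphi^n(x)$ is the only place requiring genuine care, and once it is in the collapsed form above everything reduces to Lemma~\ref{combinatorial lemma} together with elementary facts about algebraic subgroups; the one subtlety worth flagging is that Lemma~\ref{combinatorial lemma} delivers only $\ell u\in C(K)$ rather than $u\in C(K)$, so the passage to $C=G$ must go through the torsion-image-in-$G/C$ argument rather than being immediate.
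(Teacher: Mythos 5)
Your proposal is correct and follows essentially the same route as the paper, which simply defers to the derivation of \cite[Lemma~5.7]{GS-abelian} from \cite[Lemma~5.6]{GS-abelian}: expand $\varphi^n(x)$ via the binomial theorem using the nilpotency of $\beta$ to get an integer-valued polynomial combination of the $\beta^{k-1}(\beta(x)+y)$, apply Lemma~\ref{combinatorial lemma}, and then use that an infinite subgroup of a Zariski-dense cyclic group is Zariski dense (your torsion-image-in-$G/C$ argument is an equivalent phrasing of this last step). The explicit closed formula and the handling of the factor $\ell_1$ are exactly what the cited derivation does, so there is nothing to add.
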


\begin{proof}
The proof is identical with the derivation of \cite[Lemma~5.7]{GS-abelian} from \cite[Lemma~5.6]{GS-abelian}; this time, one employs Lemma~\ref{combinatorial lemma} in order to derive the desired conclusion.

For the ``moreover'' part in Lemma~\ref{infinitely many iterates nilpotent operator}, one argues as in the proof of \cite[Corollary~5.8]{GS-abelian}; note that if a cyclic subgroup of $G(K)$ is Zariski dense, then any infinite subgroup of it is also Zariski dense (see also \cite[Lemma~3.9]{GS-abelian}).
\end{proof}


\section{Proof of our main result}
\label{section proof}

\begin{proof}[Proof of Theorem~\ref{main result}.]
By Fact~\ref{rigidity}, there exists a dominant group endomorphism $\tau :G\lra G$, and there exists $y\in G(K)$ such that $\varphi(x)=\tau(x)+y$ for all $x\in G$. By \cite[Lemma~2.1]{preprint 2}, it suffices to prove Theorem~\ref{main result} for an iterate $\varphi^n$ with $n>0$. Replacing $\varphi$ by $\varphi^n$ replaces $y$ by $\sum_{i=0}^{n-1} \tau^i(y)$ and $\tau$ by $\tau^n$. As a result, we may assume 
\begin{equation}
\label{kernel tau}
\dim\ker(\tau^m-\id)=\dim(\ker(\tau-\id))\text{ for all $m\in\N$.} 
\end{equation}
Letting $f\in\Z[t]$ be the minimal polynomial of $\tau\in\End(G)$, we may therefore assume that $1$ is the only root of unity which is a root of $f$.

Let $r$ be  the order of vanishing at $1$ of $f$, and let $f_1\in \Z[t]$ such that $f(t)=f_1(t)\cdot (t-1)^r$. Then $f_1$ is also a monic polynomial. 
Let $G_1:=(\tau-\id)^r(G)$ and let $G_2:=f_1(\tau)(G)$, where $f_1(\tau)\in\End(G)$ and $\id$ is the identity map on $G$. 
By definition, both $G_1$ and $G_2$ are connected algebraic subgroups of $G$, hence they are both semiabelian subvarieties of $G$. By definition, the restriction $\tau|_{G_1}\in\End(G_1)$ has  minimal polynomial equal to $f_1$ whose roots are not roots of unity. On the other hand, $(\tau-\id)^r|_{G_2}=0$. Furthermore, as shown in \cite[Lemma~6.1]{GS-abelian},
\begin{equation} 
\label{decomposition G}
G=G_1+G_2\text{ and }G_1\cap G_2\text{ is finite.}
\end{equation}
Even though \cite[Lemma~6.1]{GS-abelian} was written in the context of abelian varieties, it uses no specific properties of abelian varieties; instead it is valid for any commutative algebraic group. So, $G$ is isogenuous with the direct product $G_1\times G_2$.

We let $y_1\in G_1$ and $y_2\in G_2$ such that $y=y_1+y_2$. We  denote by $\tau_i$ the induced action of $\tau$ on each $G_i$. Since the minimal polynomial $f_1$ of $\tau_1\in\End(G_1)$ does not have the root $1$, it follows that $(\id-\tau_1):G_1\lra G_1$ is an isogeny. As a result, there exists $y_0\in G_1(K)$ such that $(\id-\tau_1)(y_0)=y_1$. Using Lemma~\ref{replacing by a conjugate}, it suffices to prove Theorem~\ref{main result} for $T_{-y_0}\circ \varphi\circ T_{y_0}$, where $T_z$ represents the translation-by-$z$ automorphism of $G$ (for any given point $z\in G$). We may therefore assume that $y_1=0$.

Let $\varphi_i:G_i\lra G_i$ be given by $\varphi_1(x)=\tau_1(x)$ and $\varphi_2(x)=\tau_2(x)+y_2$; then for each $x_1\in G_1(K)$ and $x_2\in G_2(K)$ we have that 
\begin{equation}
\label{decomposition varphi}
\varphi(x_1+x_2)=\varphi_1(x_1)+\varphi_2(x_2). 
\end{equation}
We let $\beta:=(\tau_2-\id)|_{G_2}\in\End(G_2)$; then $\beta^r=0$. Let $B$ be the Zariski closure of the subgroup of $G_2$ generated by $\beta(G_2)$ and $y_2$; then $B$ is an algebraic subgroup of $G_2$.

\begin{lemma}
\label{fibration exists}
Assume $B\ne G_2$. Then $C:=G_1+B$ is a proper algebraic subgroup of $G$ and moreover, letting $f:G\lra G/C$ be the natural quotient homomorphism, we have that $f\circ \varphi=f$.
\end{lemma}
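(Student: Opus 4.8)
The plan is to verify the two assertions in turn: first that $C = G_1 + B$ is a proper algebraic subgroup of $G$ under the hypothesis $B \neq G_2$, and then that the quotient map $f\colon G \to G/C$ satisfies $f \circ \varphi = f$. For the first assertion, note that since $G_1 \cap G_2$ is finite by \eqref{decomposition G}, the subgroup $G_1 + B$ has dimension $\dim(G_1) + \dim(B)$, which is strictly less than $\dim(G_1) + \dim(G_2) = \dim(G)$ precisely because $B \subsetneq G_2$ forces $\dim(B) < \dim(G_2)$ (here one uses that $B$ is an algebraic subgroup of the connected group $G_2$, so a proper subgroup has strictly smaller dimension or, if the dimensions agree, $B$ would be a union of finitely many cosets of $G_2^\circ = G_2$, forcing $B = G_2$). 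Hence $C$ is proper. One should also record that $C$ is a genuine algebraic subgroup: $G_1$ and $B$ are both algebraic subgroups, so their sum is again an algebraic subgroup.

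For the second assertion, the key point is to show $\varphi(x) - x \in C$ for every $x \in G(K)$, since then $f(\varphi(x)) = f(x)$ for all $x$, which is exactly $f \circ \varphi = f$. Write $x = x_1 + x_2$ with $x_i \in G_i(K)$ (possible up to the finite intersection, but since we only need the class modulo $C \supset G_1$, any such decomposition works). Using the decomposition \eqref{decomposition varphi}, we have $\varphi(x) = \varphi_1(x_1) + \varphi_2(x_2)$ with $\varphi_1(x_1) = \tau_1(x_1) \in G_1 \subset C$, so modulo $C$ we get $\varphi(x) \equiv \varphi_2(x_2) = \tau_2(x_2) + y_2 \pmod{C}$. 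On the other hand $x \equiv x_2 \pmod{C}$, so it suffices to show $\tau_2(x_2) + y_2 - x_2 \in C$, i.e. $\beta(x_2) + y_2 \in C$ where $\beta = (\tau_2 - \id)|_{G_2}$. But $\beta(x_2) \in \beta(G_2) \subset B \subset C$ and $y_2 \in B \subset C$ by the very definition of $B$ as the Zariski closure of the subgroup generated by $\beta(G_2)$ and $y_2$. Therefore $\beta(x_2) + y_2 \in C$, completing the argument.

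The only mild subtlety — and the place I expect to need a sentence of care rather than a genuine obstacle — is the decomposition $x = x_1 + x_2$: because $G = G_1 + G_2$ with $G_1 \cap G_2$ merely finite rather than trivial, the decomposition is not unique and not literally a direct sum. This is harmless here since everything is computed modulo $C$, and $C$ contains all of $G_1$; concretely, given $x \in G(K)$ pick any $x_1 \in G_1(K)$, $x_2 \in G_2(K)$ with $x = x_1 + x_2$ (such exist because $G = G_1 + G_2$ as groups of $K$-points, $K$ being algebraically closed), and run the computation above. Alternatively, one can phrase the whole argument on the isogenous product $G_1 \times G_2$ and descend, but the direct computation modulo $C$ is cleaner. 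Everything else is a formal consequence of \eqref{decomposition varphi} and the definition of $B$.
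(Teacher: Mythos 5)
Your proposal is correct and follows essentially the same route as the paper: the dimension count via $\dim(B)<\dim(G_2)$ and the finiteness of $G_1\cap G_2$ for properness, and then the computation of $f(\varphi(x))$ modulo $C$ using $\varphi_1(x_1)\in G_1\subset C$ and $\beta(x_2),y_2\in B\subset C$, which is exactly the paper's chain of equalities. Your remark on the non-uniqueness of the decomposition $x=x_1+x_2$ is a fair point of care that the paper leaves implicit; it is indeed harmless since everything is read modulo $C\supset G_1$.
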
 

\begin{proof}[Proof of Lemma~\ref{fibration exists}.]
Since $G_2$ is connected and $B$ is assumed to be a proper algebraic subgroup, we have $\dim(B) <\dim(G_2)$. As a result, \eqref{decomposition G} tells us that $C=G_1+B$ is also a proper algebraic subgroup of $G$. Then the quotient map  $f:G\lra G/C$  
is a dominant morphism to a nontrivial semiabelian variety and moreover, we claim that $f\circ \varphi = f$. Indeed, for each $x\in G$, we let $x_i\in G_i$ (for $i=1,2$) such that $x=x_1+x_2$ (see \eqref{decomposition G}) and then we get  
\begin{align*}
f(\varphi(x)) & = f(\varphi_1(x_1)+\varphi_2(x_2))\text{ by \eqref{decomposition varphi}}\\
& = f(\varphi_2(x_2))\text{ because $\varphi_1(x_1)\in G_1\subset C$}\\
& = f(x_2+\beta(x_2)+y_2)\text{ by definition of $\varphi_2$ and $\beta$}\\
& = f(x_2)\text{ because $\beta(x_2),y_2\in B\subset C$}\\ 
& = f(x_1+x_2)\text{ because $x_1\in G_1$}\\
& = f(x),
\end{align*}
as desired.
\end{proof}

By Lemma~\ref{fibration exists}, if $B\neq G_2$ then $\varphi$ preserves a non-constant fibration and so Theorem~\ref{main result} holds. As a result, we may assume that $B=G_2$. We will prove in this case that there exists $x\in G(K)$ with a Zariski dense orbit under the action of $\varphi$. In order to do this, we first show that we may also assume $G$ is the direct product $G_1\oplus G_2$. Indeed, we construct 
$$\widetilde{\varphi}:=(\varphi_1,\varphi_2):G_1\oplus G_2\lra G_1\oplus G_2,$$ 
where (as before) $\varphi_1(x_1)=\tau_1(x_1)$ for each $x_1\in G_1$ and $\varphi_2(x_2)=\tau_2(x_2)+y_2$ for each $x_2\in G_2$. We also let $\sigma:G_1\oplus G_2\lra G$ given by $\sigma(x_1\oplus x_2)=\iota_1(x_1)+\iota_2(x_2)$, where $\iota_j:G_j\lra G$ are the inclusion maps.

\begin{lemma}
\label{it suffices for direct product}
If there exists $(x_1,x_2)\in (G_1\oplus G_2)(K)$ with a Zariski dense orbit under the action of $\widetilde{\varphi}$, 
then $x:=\sigma(x_1,x_2)\in G(K)$ has a Zariski dense orbit under $\varphi$.
\end{lemma}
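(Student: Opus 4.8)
The plan is to exploit that the isogeny $\sigma$ intertwines the two dynamical systems, together with the fact that a finite morphism of varieties is a closed map.

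First I would record that $\sigma\colon G_1\oplus G_2\lra G$ is an isogeny: by \eqref{decomposition G} we have $G=G_1+G_2$ while $G_1\cap G_2$ is finite, so $\sigma$ is surjective with finite kernel, hence a finite surjective morphism. Next, \eqref{decomposition varphi} shows at once that $\varphi\circ\sigma=\sigma\circ\widetilde{\varphi}$, and therefore $\varphi^n\circ\sigma=\sigma\circ\widetilde{\varphi}^n$ for every $n\ge 0$ by an immediate induction. Writing $x:=\sigma(x_1\oplus x_2)$, this yields $\OO_\varphi(x)=\sigma\bigl(\OO_{\widetilde{\varphi}}(x_1\oplus x_2)\bigr)$.

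It then remains to pass to Zariski closures. Because $\sigma$ is finite it is proper, hence a closed (and, being a morphism, continuous) map, so $\overline{\sigma(Z)}=\sigma\bigl(\overline{Z}\bigr)$ for any subset $Z\subseteq G_1\oplus G_2$ (one inclusion comes from continuity, the other from closedness). Taking $Z:=\OO_{\widetilde{\varphi}}(x_1\oplus x_2)$, which is Zariski dense in $G_1\oplus G_2$ by hypothesis, and using that $\sigma$ is onto, we obtain $\overline{\OO_\varphi(x)}=\sigma(G_1\oplus G_2)=G$, i.e.\ $x$ has a Zariski dense $\varphi$-orbit.

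The argument is purely formal once the intertwining relation is in place; the only ingredient beyond a one-line verification is the identity $\overline{\sigma(Z)}=\sigma(\overline{Z})$, and this is exactly where properness (equivalently, finiteness) of $\sigma$ is used — it would fail for an arbitrary dominant morphism, which is why it matters that $\sigma$ is an isogeny rather than merely dominant.
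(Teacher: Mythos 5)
Your proof is correct and follows essentially the same route as the paper: establish the intertwining relation $\sigma\circ\widetilde{\varphi}=\varphi\circ\sigma$ from \eqref{decomposition varphi}, deduce $\sigma\circ\widetilde{\varphi}^n=\varphi^n\circ\sigma$, and push the dense orbit forward through $\sigma$. One correction to your closing remark, though: properness of $\sigma$ is not needed, and your claim that the argument would fail for an arbitrary dominant morphism is false. To conclude density you only need the one inclusion $\sigma\bigl(\overline{Z}\bigr)\subseteq\overline{\sigma(Z)}$, which holds for any continuous map; combined with surjectivity (indeed mere dominance) of $\sigma$ this already gives $\overline{\OO_\varphi(x)}\supseteq\sigma(G_1\oplus G_2)$, hence $\overline{\OO_\varphi(x)}=G$. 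This is exactly why the paper's proof ends by noting only that $\sigma$ is a dominant homomorphism.
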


\begin{proof}[Proof of Lemma~\ref{it suffices for direct product}.]
Indeed, identifying each $G_j$ with its image $\iota_j(G_j)$ inside $G$,  \eqref{decomposition varphi} yields that 
\begin{equation}
\label{commuting sigma}
\sigma\circ \widetilde{\varphi}=\varphi\circ \sigma. 
\end{equation}
Then equation \eqref{commuting sigma} yields $\sigma\circ \widetilde{\varphi}^n=\varphi^n\circ \sigma$ for each $n\in\N$, which means that if there exists a Zariski dense orbit $\OO_{\widetilde{\varphi}}(x_1\oplus x_2)\subset (G_1\oplus G_2)(K)$, then $\OO_\varphi(x_1+x_2)\subset G(K)$ is also a Zariski dense orbit; note that $\sigma$ is a dominant homomorphism.  
\end{proof}

So, from now on, we may assume $G=G_1\oplus G_2$ and that $\varphi:G\lra G$ is given by the action $(x_1,x_2)\mapsto (\varphi_1(x_1),\varphi_2(x_2))$. 

In order to prove the existence of a $K$-point in $G$ with a Zariski dense orbit, we first prove there exists $x_2\in G_2(K)$ such that $\OO_{\varphi_2}(x_2)$ is Zariski dense in $G_2$. Since we assumed that the group generated by $\beta(G_2)$ and $y_2$ is Zariski dense in $G_2$, Proposition~\ref{existence of a point 2} yields the existence of $x_2\in G_2(K)$ such that the cyclic group generated by $\beta(x_2)+y_2$ is Zariski dense in $G_2$. Then Lemma~\ref{infinitely many iterates nilpotent operator} yields that any infinite subset of $\OO_{\varphi_2}(x_2)$ is Zariski dense in $G_2$. If $G_1$ is trivial, then $G_2=G$ and $\varphi=\varphi_2$ and so,  Theorem~\ref{main result} is proven. Hence, from now on, assume that $\dim(G_1)>0$.

Let $\pi_i$ (for $i=1,2$) be the projection of $G$ onto each of its two factors $G_i$. 
Let $\Gamma$ be the $\End(G_2)$-submodule of $G_2(K)$ generated by $x_2$ and $y_2$. By Fact~\ref{endomorphism f.g.}, $\Gamma$ is a finitely generated subgroup of $G_2(K)$.  Using Proposition~\ref{generic relative to a f.g. group}, we may find $x_1\in G_1(K)$ with the property that if there exists a proper algebraic subgroup $H\subset G=G_1\oplus G_2$ such that $x_1\in \Gamma+H$ (or equivalently, there exists $\gamma\in\Gamma$ such that $(x_1,\gamma)\in H\subset G_1\oplus G_2$), then $\pi_2(H)$ is a proper algebraic subgroup of $G_2$. Let $x:=x_1\oplus x_2$ (or equivalently, $x=(x_1,x_2)$); we will prove that $\OO_\varphi(x)$ is Zariski dense in $G$.

Let $V$ be the Zariski closure of $\OO_\varphi(x)$. Then Fact~\ref{fact closure orbit} yields that $V$ is a finite union of cosets of algebraic subgroups of $G$. So, if $V\ne G$, then there exists a coset $c+H$ of a proper algebraic subgroup $H\subset G$ which contains  $\{\varphi^n(x)\}_{n\in S}$ for some infinite subset $S\subset \N$. In particular, for any integers $n>m$ from $S$, we have that 
\begin{equation}
\label{inclusion 0}
H\text{ contains  }\varphi^n(x)-\varphi^m(x). 
\end{equation}
Using the fact that $G=G_1\oplus G_2$, we construct $\mu:G\lra G$ as
$$\mu(z_1,z_2):=\left(\tau_1^n(z_1)-\tau_1^m(z_1), z_2\right).$$
Recall that the minimal polynomial $f_1$ of $\tau_1=\tau|_{G_1}$ does not have eigenvalues which are roots of unity, and so $\left(\tau_1^{n-m}-\id\right)$ is an isogeny on $G_1$. Because $\tau_1$ is also an isogeny on $G_1$, we see $\mu$ is an isogeny on $G$. Since
\begin{equation}
\label{inclusion 1}
\varphi^n(x)-\varphi^m(x)=\left(\tau_1^n(x_1)-\tau_1^m(x_1)\right)\oplus \left( \varphi_2^n(x_2) - \varphi_2^m(x_2)\right)
\end{equation}
$$\text{and }\varphi_2^n(x_2)-\varphi_2^m(x_2)\in\Gamma,$$
we obtain that there exists $\gamma\in \Gamma$ such that $\mu(x_1,\gamma)\in H$. In particular, this yields that $(x_1,\gamma)\subset \mu^{-1}(H)$; furthermore, $\mu^{-1}(H)$ is a proper algebraic subgroup of $G$ since $\mu$ is an isogeny. By our choice of $x_1$, we conclude that $\pi_2(\mu^{-1}(H))$ is a proper algebraic subgroup of $G_2$. However, since $\mu|_{G_2}$ is the identity map, we get that $\pi_2(H)$ is a proper algebraic subgroup of $G_2$. On the other hand, using \eqref{inclusion 0} and \eqref{inclusion 1}, we get that for any integers $n>m$ from $S$,  
\begin{equation}
\label{inclusion 2}
\pi_2(H)\text{ contains }\varphi_2^n(x_2)-\varphi_2^m(x_2).
\end{equation}
As a result, if we fix $m_0\in S$ we see that there are infinitely many $n$ for which $\varphi_2^n(x_2) - \varphi_2^{m_0}(x_2)\in H$. That is, the coset $\varphi_2^{m_0}(x_2) + \pi_2(H)$ contains infinitely many points of the form $\varphi_2^n(x_2)$. Notice that $\varphi_2(x)=\tau_2(x)+y_2$ and $\beta = \tau_2 - \id$ is nilpotent. Furthermore, the cyclic subgroup generated by $\beta(x_2) + y_2$ is Zariski dense in $G_2$. As a result, Lemma~\ref{infinitely many iterates nilpotent operator} tells us $G_2=\pi_2(H)$, which is a contradiction. Hence $\OO_\varphi(x)$ is Zariski dense in $G$, which concludes our proof.
\end{proof}

\begin{remark}
\label{more precise statement}
As shown in the proof of Theorem~\ref{main result} (see Lemma~\ref{fibration exists} specifically), we obtain that there exists a positive integer $n$ such that if $\varphi$ preserves a nonconstant fibration, then actually there exists a proper algebraic subgroup $C$ such that 
\begin{equation}
\label{H n}
f\circ \varphi^n=f\text{, where }f:G\lra G/C 
\end{equation}
is the usual quotient homomorphism. Also, one cannot expect that $n$ can be taken to be equal to $1$ in \eqref{H n}, as shown by the following example. If $\varphi:\bG_m\lra\bG_m$ is given by $\varphi(x)=1/x$, then $\varphi^2$ is the identity on $\bG_m$, and so, with the above notation, $n=2$ and $C=\{1\}$ is the trivial subgroup of $\bG_m$.  On the other hand, $\varphi$ does not preserve a nonconstant power map on $\bG_m$; instead $\varphi$ preserves the nonconstant rational function $f(x):=x+1/x$.

So, the most one can get for the self-map $\varphi$ itself is that there exists a finite collection of proper algebraic subgroups $C_1,\dots, C_\ell$ of $G$ such that if $\varphi$ preserves a nonconstant fibration, then each orbit of a point in $G$ is contained in a finite union of cosets $c_i+C_i$ (for some $c_i\in G$). The subgroups $C_i$ are precisely the subgroups appearing in the orbit under $\varphi$ of the subgroup $C$ from \eqref{H n}; note that equation \eqref{H n} yields that $C$ is fixed by $\varphi^n$ and so, there exist finitely many subgroups $C_i$ in the orbit of $C$ under the action of $\varphi$.
\end{remark}

\begin{remark}
\label{commutative monoid}
One could ask whether our arguments could be adapted to yield a generalization of Theorem~\ref{main result} in which the action of the cyclic monoid generated by $\varphi$ is replaced by the action of a finitely generated commutative monoid $S$ of regular self-maps on the semiabelian variety $G$. The corresponding statement for abelian varieties was proven in \cite[Theorem~1.3]{GS-abelian}, essentially using the same strategy as in the case of a cyclic monoid (i.e., \cite[Theorem~1.2]{GS-abelian}), combined with some results regarding commutative monoids and linear algebra. However, in the proof from \cite[Theorem~1.3]{GS-abelian} (see the bottom of \cite[page~462]{GS-abelian}), one uses Poincar\'e's Reducibility Theorem in a crucial way by finding a complement of a given algebraic subgroup of an abelian variety. In our proof of Theorem~\ref{main result} we can construct such a complement (see \eqref{decomposition G}) even in the absence of Poincar\'e's Reducibility Theorem, but that strategy fails when one deals with an arbitrary finitely generated commutative monoid $S$; choosing a decomposition of $G$ as a sum of two semiabelian subvarieties as in \eqref{decomposition G} which works simultaneously for all maps from $S$ is not possible unless either $S$ is cyclic (as in Theorem~\ref{main result}), or $G$ is a split semiabelian variety (and therefore Poincar\'e's Reducibility Theorem applies). So, for a non-split semiabelian variety $G$, in the absence of Poincar\'e's Reducibility Theorem, one would need a completely new strategy for proving the generalization of Theorem~\ref{main result} regarding a finitely generated commutative monoid of regular self-maps acting on $G$.
\end{remark}



\end{document}